\documentclass[12pt]{amsart}
\usepackage{a4wide}
\usepackage[utf8]{inputenc}
\usepackage{amsmath}
\usepackage{amssymb}
\usepackage{amsfonts}
\usepackage{amsopn}
\usepackage{graphicx}
\usepackage{enumerate}
\usepackage{color}
\usepackage{mathtools}
\usepackage{mathrsfs}
\usepackage{bbm}
\usepackage{yhmath}
\usepackage{cite}
\usepackage{tikz}
\usetikzlibrary{decorations.markings}
\usepackage[colorlinks,linkcolor=blue,anchorcolor=blue,citecolor=blue]{hyperref}

\setlength{\parskip}{0.3\baselineskip}

\newtheorem{theorem}{Theorem}[section]
\newtheorem{proposition}[theorem]{Proposition}
\newtheorem{lemma}[theorem]{Lemma}

\newtheorem{corollary}[theorem]{Corollary}
\theoremstyle{definition}
\newtheorem{example}[theorem]{Example}
\theoremstyle{remark}

\numberwithin{equation}{section}

\newcommand{\R}{\mathbb{R}}
\newcommand{\Q}{\mathbb{Q}}

\newcommand{\N}{\mathbb{N}}

\newcommand{\f}{\infty}

\newcommand{\ii}{\mathbf{i}}
\newcommand{\jj}{\mathbf{j}}
\newcommand{\uu}{\mathbf{u}}
\newcommand{\vv}{\mathbf{v}}

\title[Self-embeddings of homogeneous self-similar sets]{Self-embeddings of homogeneous self-similar sets generated by three maps}

\author{Zhiqiang Wang}
\address[Z. Wang]{College of Mathematics and Statistics, Center of Mathematics, Key Laboratory of Nonlinear Analysis and its Applications (Ministry of Education), Chongqing University, Chongqing 401331, P.R.China}
\email{zhiqiangwzy@163.com,zqwangmath@cqu.edu.cn}

\subjclass[2020]{28A80}

\begin{document}
\begin{abstract}
For $0< \rho < 1/3$ and $\rho \le \lambda \le 1-2\rho$,
let $E$ be the self-similar set generated by the iterated function system $$\Phi = \big\{ \varphi_1(x) = \rho x ,\; \varphi_2(x) = \rho x + \lambda, \; \varphi_3(x) = \rho x + 1- \rho \big\}.$$
All contractive similitudes $f$ with $f(E) \subset E$ are characterized: one can find $i_1, i_2, \ldots, i_n \in \{1,2,3\}$ such that \[ f(E)=\varphi_{i_1} \circ \varphi_{i_2} \circ \cdots \circ \varphi_{i_n} (E). \]
\end{abstract}
\keywords{self-similar set, self-embedding}

\maketitle

\section{Introduction}

In this paper, an \emph{iterated function system} (IFS) on $\R^d$ is a finite family $\Phi = \{\phi_i\}_{i=1}^m$ of contractive similitudes, where $\phi_i(x) = \rho_i O_i x + b_i$ with $0<\rho_i < 1$, $O_i$ a $d \times d$ real orthogonal matrix, and $b_i \in \R$.
According to Hutchinson \cite{Hutchinson-1981}, there exists a unique non-empty compact set $K \subset \R^d$, which is called a \emph{self-similar set}, such that
\begin{equation}\label{eq:invariant-set}
  K = \bigcup_{i=1}^m \phi_i(K).
\end{equation}
If $\rho_i O_i \equiv \rho_{\Phi} O_{\Phi}$ for all $1 \le i \le m$, then we say the IFS $\Phi$ is \emph{homogeneous}, and the corresponding set $K$ is called a homogeneous self-similar set.
The IFS $\Phi$ is said to satisfy the \emph{open set condition} (OSC) if there is a non-empty open set $V \subset \R^d$ such that the sets $\phi_i(V),\; 1\le i\le m,$ are pairwise disjoint subsets of $V$.
In this case, the Hausdorff dimension of $K$ is given by the value $s>0$ satisfying the equation $\sum_{i=1}^m \rho_i^s = 1$ \cite{Hutchinson-1981}.
If the sets $\phi_i(K),\; 1\le i\le m,$ are pairwise disjoint, then we say the IFS $\Phi$ satisfies the \emph{strong separation condition} (SSC).
Clearly, the SSC implies the OSC, but not vice versa.
We always assume that a self-similar is not a singleton.

Let $K$ be a self-similar set generated by an IFS $\Phi = \{\phi_i\}_{i=1}^m$ in $\R^d$.
Let $\Omega_m^*$ denote the set of all finite words over the alphabet $\{1,2,\ldots,m\}$ including the empty word $\emptyset$.
For $\ii = i_1 i_2 \ldots i_n \in \Omega_m^*$, define \[ \phi_{\ii}: = \phi_{i_1} \circ \phi_{i_2} \circ \cdots \circ \phi_{i_n},\]
where $\phi_{\emptyset}$ denotes the identity map.
By (\ref{eq:invariant-set}), the set $K$ consists of many small similar copies of itself, and we have \[ \phi_{\ii}(K)\subset K\quad \forall \ii\in \Omega_m^*. \]
A natural question is to characterize all similitudes $f$ such that $f(K) \subset K$.
This question is related to the generating IFS problem \cite{Feng-Wang-2009,Deng-Lau-2013,Deng-Lau-2017} and the group of isometries of fractals \cite{Moran-2012,Deng-Yao-2022}.


Let $f$ be a similitude on $\R^d$ such that $f(K) \subset K$.
In \cite{Feng-Wang-2009}, Feng and Wang proved the so-called \emph{logarithmic commensurability theorem}: if $\Phi$ is a homogeneous IFS in $\R$ satisfying the OSC and $\dim_H K < 1$, then we have $\log \rho_f / \log \rho_\Phi \in \Q$, where $\rho_f$ is the similitude ratio of $f$ and $\rho_\Phi$ is the common contraction ratio of $\Phi$.
This result was generalized to the higher dimensional cases by Elekes, Keleti, and M\'{a}th\'{e} \cite{Elekes-Keleti-Mathe-2010}. They proved that if the IFS $\Phi$ in $\R^d$ satisfies the SSC, then there exist a positive integer $k \in \N$ and two finite words $\ii,\jj \in \Omega_m^*$ such that \[ f^k \circ \phi_{\ii} = \phi_{\jj}. \]
Under a much stronger separation condition, Yao \cite{Yao-2017} showed that there exists a finite word $\ii \in \Omega_m^*$ such that \[ f(K) = \phi_{\ii}(K). \]
Recently, Xiao \cite{Xiao-2024} showed the relative openness of $f(K)$ for the homogeneous IFS satisfying the SSC, that is, there exists $\mathcal{I} \subset \Omega_m^*$ such that \[ f(K) = \bigcup_{\ii \in \mathcal{I}} \phi_{\ii}(K). \]
In the same paper, Xiao \cite{Xiao-2024} also constructed a counterexample in the plane when the orthogonal parts vary.
It is worth noting that Algom and Hochman \cite{Algom-Hochman-2019} have studied self-embeddings of Bedford-McMullen carpets.

In \cite{Moran-2012}, Mor\'{a}n studied the group of isometries of a self-similar set, and proved that if the IFS $\Phi$ satisfies the SSC, then the group \[ \mathcal{I}_K := \big\{ \sigma: \; \sigma\;\text{is an isometry and}\; \sigma(K) = K  \big\} \] is finite.
Furthermore, by Proposition 2.2 in \cite{Deng-Lau-2017}, one can show the same conclusion holds if $K \subset \R^d$ is a totally disconnected compact set and spans $\R^d$.
By Lemma 4.8 in \cite{Elekes-Keleti-Mathe-2010}, one can deduce that if the IFS $\Phi$ satisfies the SSC, then the semi-group \[ \mathcal{S}_K :=\big\{ f: \; f\;\text{is a similitude and}\; f(K) \subset K \big\} \] is finitely generated, and thus, is countable.
Without separation conditions, Hochman \cite{Hochman-2017} showed that if $K$ is a self-similar set in $\R$ with $\dim_H K < 1$, then $\dim_H \mathcal{S}_K =0$ where the set $\mathcal{S}_K$ is viewed as a subset of $\R^2$.
Algom \cite{Algom-2020} proved a similar result in the plane under a mild condition.

We expect that each $f\in \mathcal{S}_K$ could be written as $f = \phi_{\ii} \circ \sigma$ where $\ii \in \Omega_m^*$ and $\sigma \in \mathcal{I}_K$.
This expectation does not come true even under the SSC assumption (see Example \ref{example}), but it has been confirmed in a few particular cases such as a class of totally self-similar sets \cite{Yao-Li-2015,Dajani-Kong-Yao-2019}, the Vicsek snowflake and the Koch curve \cite{Yao-Li-2016}, and a class of self-similar sets with complete overlap but not totally self-similar \cite{Kong-Yao-2021}.

For $0< \rho < 1/3$ and $\rho \le \lambda \le 1-2\rho$,
let $E_{\rho,\lambda}$ be the self-similar set generated by the IFS $$\Phi_{\rho,\lambda} := \big\{ \varphi_1(x) = \rho x ,\; \varphi_2(x) = \rho x + \lambda, \; \varphi_3(x) = \rho x + 1- \rho \big\}.$$
Note that the convex hull of $E_{\rho,\lambda}$ is the interval $[0,1]$, and the IFS $\Phi_{\rho,\lambda}$ always satisfies the OSC with the open set $V =(0,1)$.
The main result of this paper is to characterize all similitudes $f$ such that $f(E_{\rho,\lambda}) \subset E_{\rho,\lambda}$.

\begin{theorem}\label{thm:three-map}
  Let $f$ be a contractive similitude on $\R$ such that $f(E_{\rho,\lambda}) \subset E_{\rho,\lambda}$.

  {\rm(i)} If $\lambda \ne (1-\rho)/2$, then there exists $\ii \in \Omega_3^*$ such that $f(x) = \varphi_{\ii}(x)$.

  {\rm(ii)} If $\lambda = (1-\rho)/2$, then there exists $\ii \in \Omega_3^*$ such that $f(x) = \varphi_{\ii}(x)$ or $f(x) = \varphi_{\ii}(1-x)$.
\end{theorem}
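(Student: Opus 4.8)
The plan is to peel off leading maps until one reaches an ``irreducible'' copy of $E$ sitting inside $E$, and then to rule out \emph{contracting} irreducible copies by comparing gap lengths. Throughout write a contractive similitude as $f(x)=\pm\rho_f x+c$ with $0<\rho_f<1$, and record the first-level geometry of $E=E_{\rho,\lambda}$: its convex hull is $[0,1]$; its two first-level gaps are $G_1=(\rho,\lambda)$ and $G_2=(\lambda+\rho,1-\rho)$, of lengths $g_1=\lambda-\rho$ and $g_2=1-2\rho-\lambda$, with $3\rho+g_1+g_2=1$. A direct computation shows that the reflection $R(x)=1-x$ maps $E$ onto $E$ precisely when $g_1=g_2$, i.e. $\lambda=(1-\rho)/2$, and this is the only source of the extra solutions in part (ii). Since $\rho<1/3$ forces $\dim_H E=s<1$ (where $3\rho^s=1$) and $\Phi_{\rho,\lambda}$ satisfies the OSC, the logarithmic commensurability theorem of Feng and Wang gives $\log\rho_f/\log\rho\in\Q$; I record this for context, although the argument below pins down $\rho_f$ directly.

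First I would reduce to an irreducible configuration. Let $n\ge0$ be the largest integer for which $f(E)\subset\varphi_{\ii}(E)$ for some word $\ii$ of length $n$; this maximum exists because $f(E)$ has positive diameter. Set $g=\varphi_{\ii}^{-1}\circ f$, so that $g(E)\subset E$ with ratio $\rho_g=\rho_f\rho^{-n}$, and, by maximality, $g(E)$ lies in no single first-level cylinder; equivalently the convex hull $[a,b]$ of $g(E)$ straddles $G_1$ or $G_2$. It then suffices to prove that $\rho_g=1$: then $g$ is an isometry with $g(E)\subset E$ and equal diameter, hence $g(E)=E$, so $g\in\mathcal{I}_E$; comparing the first-level gap pattern of $g(E)$ with that of $E$ forces $g=\mathrm{id}$ when $g_1\ne g_2$ and $g\in\{\mathrm{id},R\}$ when $g_1=g_2$, and unwinding $f=\varphi_{\ii}\circ g$ yields exactly the two alternatives of the theorem.

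The engine is the observation that the endpoints $g(0),g(1)$ of $g(E)$ lie in $E$, and that any gap of $E$ meeting the open interval $(a,b)$ contains no point of $g(E)$ and hence is contained in a gap of the scaled copy $g(E)$. The gaps of $g(E)$ have lengths $\rho_g\rho^k g_1$ and $\rho_g\rho^k g_2$ $(k\ge0)$, so a gap of $E$ of length $g_i$ can be contained in a gap of $g(E)$ only if $\rho_g\rho^k g_j\ge g_i$ for some $k$; as $\rho_g<1$ this needs $k=0$ and $g_j>g_i$. Consequently: if $g_1=g_2$, no straddling is possible with $\rho_g<1$, so $\rho_g=1$ at once (this settles (ii)); likewise, if $g(E)$ straddles both gaps, or straddles the larger gap, the same bookkeeping yields $\rho_g\ge1$. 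The single surviving possibility is $g_1\ne g_2$ with $g(E)$ straddling only the \emph{smaller} gap, its larger first-level gap bridging it. In that case $g(E)$ is squeezed into two adjacent first-level cylinders, one of which contains a cluster of two of the three pieces $g\varphi_1(E),g\varphi_2(E),g\varphi_3(E)$.

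This mismatched configuration is the main obstacle. My plan is a descent: relocalize the two-piece cluster by applying the inverse of the first-level map $\varphi_j$ of the cylinder containing it; this turns $\varphi_j^{-1}\circ g\circ(\cdot)$ into a new copy of $E$ in $E$ of the \emph{same} ratio $\rho_g$, whose relevant extreme endpoint is pushed strictly toward an endpoint of $E$ (e.g. for the orientation-preserving straddle of $G_1$ one gets $\varphi_1^{-1}\circ g\circ\varphi_1$, with left endpoint $g(0)/\rho$). Iterating, the extreme endpoint is driven toward $0$ or $1$ while the copy stays contained in a first-level cylinder, so after finitely many steps either the configuration is no longer of the mismatched type---whence $\rho_g\ge1$ by the previous paragraph, a contradiction---or the map fixes an endpoint of $E$, say $g(x)=\rho_g x$. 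The fixed-endpoint case is then dispatched directly: pulling $\rho_g E\subset E$ back through the maps $\varphi_1$ reduces to a copy of ratio $t\in(\rho,1)$ fixing $0$, whose points $t\lambda$ and $t$ cannot simultaneously avoid the gap $G_1$; the mirror case (fixing $1$) is symmetric. Finally, the two boundary parameters $\lambda=\rho$ and $\lambda=1-2\rho$, where one gap degenerates, the SSC fails, and adjacent pieces meet at a single point, are treated separately, the touching point being harmless since it carries no mass. I expect the careful execution of the mismatched-case descent, together with the fixed-endpoint subcase, to be the technically heaviest part.
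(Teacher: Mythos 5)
Your reduction to an ``irreducible'' copy $g=\varphi_{\ii}^{-1}\circ f$ with $\ii$ maximal, and the observation that $\rho_g=1$ forces $g\in\{\mathrm{id},R\}$ according to whether the two gap lengths agree, is exactly the paper's skeleton, and your treatment of the equal-gap case (part (ii)) is correct and matches the paper's Theorem \ref{thm:equal-gap}. The problem is in your ``engine.'' You claim that a gap of $E$ of length $g_i$ contained in a gap of $g(E)$ forces the covering gap to be a \emph{first-level} gap of $g(E)$ (``$k=0$''), on the grounds that $\rho_g\rho^k g_j\ge g_i$ with $\rho_g<1$. That inference is false: with $g_i=g_1=\lambda-\rho$ and $g_j=g_2=1-2\rho-\lambda$ one only needs $\rho_g\rho^k g_2\ge g_1$, which can hold for $k\ge 1$ whenever $\lambda$ is close to $\rho$ (e.g.\ $\rho=0.1$, $\lambda=0.11$ gives $g_1=0.01$ while $\rho^k g_2=0.69\cdot 10^{-k}$). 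So the configuration $G_1\subset g(\varphi_{\jj}(G_2))$ with $|\jj|\ge 1$ survives your bookkeeping, and it is precisely this configuration that the paper's Proposition \ref{prop:positive-1} (and, for orientation-reversing maps, Proposition \ref{prop:no-negative-1}) is built to eliminate, via arguments --- Lemma \ref{lemma:dichotomy}(ii), the induction $f\circ\varphi_{13^n}(E)\subset\varphi_{21^n}(E)$, a passage to the limit --- that have no counterpart in your sketch. Your ``descent'' for the mismatched case is likewise only a plan: you do not argue why the relocalized copies either leave the mismatched type after finitely many steps or converge to a map fixing an endpoint of $E$.

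A second gap: the boundary case $\lambda=\rho$ cannot be dismissed with ``the touching point is harmless since it carries no mass.'' Mass is irrelevant here; the issue is that $G_1$ degenerates, so $g(E)$ can fail to lie in a single first-level cylinder without straddling any gap at all (namely $g(E)\subset[0,2\rho]$ meeting both $\varphi_1(E)\setminus\varphi_2(E)$ and $\varphi_2(E)\setminus\varphi_1(E)$), and your gap-containment engine then returns nothing. The paper devotes all of Section \ref{sec:OSC} to this case, with genuinely different arguments (Lemmas \ref{lemma:positive-2-1}--\ref{lemma:no-negative} and Propositions \ref{prop:positive-2} and \ref{prop:no-negative-2}). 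As it stands, your proposal establishes part (ii) and the easy half of part (i); the deep-level coverings of the small gap and the case $\lambda=\rho$, which is where the real content of the theorem lies, remain unproved.
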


When $\lambda = (1-\rho)/2$, the set $E_{\rho,\lambda}$ is symmetric, i.e., $1-E_{\rho,\lambda} = E_{\rho,\lambda}$, and the gap between $\varphi_1([0,1])$ and $\varphi_2([0,1])$ is equal to that between $\varphi_2([0,1])$ and $\varphi_3([0,1])$. The part (ii) of Theorem \ref{thm:three-map} follows directly from the following more general theorem.

\begin{theorem}\label{thm:equal-gap}
  Given an integer $m \ge 2$ and $0< \rho_1, \rho_2, \ldots, \rho_m < 1$ such that $\rho_1 + \rho_2 + \cdots + \rho_m < 1$, let $K$ be the self-similar set generated by the IFS \[ \Big\{ \phi_i(x) = \rho_i x + \sum_{k=1}^{i-1} \rho_k + (i-1) \gamma \Big\}_{i=1}^m, \] where $\gamma := (1-\rho_1 -\rho_2 - \cdots - \rho_m)/(m-1)$.
  Let $f$ be a contractive similitude on $\R$ such that $f(K) \subset K$.
  Then there exists $\ii \in \Omega_m^*$ such that \[ f(K) = \phi_{\ii}(K). \]

  {\rm(i)} If $1-K \ne K$, then we have $f(x) = \phi_{\ii}(x)$;

  {\rm(ii)} If $1-K = K$, then we have $f(x) = \phi_{\ii}(x)$ or $f(x) = \phi_{\ii}(1-x)$.
\end{theorem}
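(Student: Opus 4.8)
The plan is to exploit the rigid interval/gap structure of $K$ and to \emph{peel off} one generator at a time. First I record two elementary facts. Since $\phi_1(0)=0$, $\phi_m(1)=1$, and every $\phi_i([0,1])\subset[0,1]$, the convex hull of $K$ is $[0,1]$. Moreover the $m-1$ first-level gaps each have length exactly $\gamma$, while every complementary interval of $K$ arising at a deeper level is the image under some $\phi_{\ii}$ with $|\ii|\ge 1$ of one of these, hence has length $<\gamma$. Thus $\gamma$ is the maximal length of a bounded complementary interval (gap) of $K$, and the IFS satisfies the SSC.

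The central step is the following peeling claim: if $g$ is a contractive similitude of ratio $r<1$ with $g(K)\subset K$, then $g(K)\subset\phi_i(K)$ for some $i$, and moreover $r\le\rho_i$. To see this, write $[a,b]:=\mathrm{conv}(g(K))=g([0,1])$, an interval of length $r$, whose endpoints are $g(0),g(1)\in g(K)\subset K$; so $a,b\in K$ and each lies in a unique first-level piece $\phi_i([0,1])$. If $a$ and $b$ lay in different pieces, then $[a,b]$ would contain some first-level gap $G$ (of length $\gamma$) in its interior; since $G\cap K=\emptyset$ and $g(K)\subset K$, the interval $G$ would sit inside a single gap of $g(K)$. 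But every gap of $g(K)$ equals the $g$-image $g(G')$ of a gap $G'$ of $K$ (because $g$ is an affine bijection, so $g([0,1]\setminus K)=\mathrm{conv}(g(K))\setminus g(K)$), and hence has length at most $r\gamma<\gamma$, a contradiction. Therefore $a,b$ lie in the same piece, $[a,b]\subset\phi_i([0,1])$, and $g(K)\subset K\cap\phi_i([0,1])=\phi_i(K)$. Comparing diameters in this inclusion yields $r=\diam g(K)\le\diam\phi_i(K)=\rho_i$.

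Now I iterate. Set $g_0:=f$; as long as the current map $g_k$ is a strict contraction I apply the peeling claim to obtain $i_{k+1}$ with $g_k(K)\subset\phi_{i_{k+1}}(K)$, and put $g_{k+1}:=\phi_{i_{k+1}}^{-1}\circ g_k$, which again satisfies $g_{k+1}(K)\subset K$. The ratio transforms by $r_{k+1}=r_k/\rho_{i_{k+1}}$, and the diameter bound $r_k\le\rho_{i_{k+1}}$ guarantees $r_{k+1}\le 1$ at every stage, so the ratio never overshoots $1$. On the other hand $r_{k+1}\ge r_k/\max_i\rho_i$ with $\max_i\rho_i<1$, so the ratios grow at least geometrically; being bounded by $1$, the process must halt after finitely many steps, necessarily at a map $g_n$ of ratio exactly $1$. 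At termination $g_n$ is an isometry with $g_n(K)\subset K$; since $\mathrm{conv}(g_n(K))=g_n([0,1])$ is a length-$1$ subinterval of $[0,1]$ it equals $[0,1]$, forcing $g_n(x)=x$ or $g_n(x)=1-x$, and comparing diameters gives $g_n(K)=K$. Writing $\ii=i_1\cdots i_n$ we get $f=\phi_{\ii}\circ g_n$, whence $f(K)=\phi_{\ii}(K)$. If $1-K\ne K$ then $g_n(x)=1-x$ is impossible (it would force $1-K=K$), and orientation is preserved throughout the peeling, so $g_n=\mathrm{id}$ and $f=\phi_{\ii}$, giving (i); if $1-K=K$ both isometries are admissible, yielding $f(x)=\phi_{\ii}(x)$ or $f(x)=\phi_{\ii}(1-x)$, which is (ii).

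The one point requiring care, and the \textbf{main obstacle}, is the bookkeeping that keeps the ratio $\le 1$ throughout: it is precisely the diameter inequality $r_k\le\rho_{i_{k+1}}$, coming from the inclusion into a single child, that prevents the inductive ratio from jumping past $1$ and thereby forces the recursion to terminate at a genuine isometry rather than at some map carrying no further structure. Everything else reduces to the two separation facts about $K$ recorded at the outset; the orientation-reversing case for $f$ is handled identically, since the gap-length estimate is insensitive to orientation and only the final isometry $g_n$ detects it.
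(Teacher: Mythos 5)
Your proof is correct and follows essentially the same route as the paper's: the equal-gap structure forces the image into a single first-level piece (your endpoint/gap-length argument is the paper's Lemma \ref{gap-lemma} in disguise), and iterating the peeling until the ratio reaches $1$ reduces matters to an isometry mapping $[0,1]$ onto itself, hence the identity or $x \mapsto 1-x$. The only cosmetic slip is the phrase ``comparing diameters gives $g_n(K)=K$'' (a proper compact subset can have the same diameter); the correct and immediate justification is that $g_n$ is an involution, so $g_n(K)\subset K$ implies $K=g_n^2(K)\subset g_n(K)$, which is exactly the paper's Lemma \ref{lemma:subset-to-equal}.
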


Applying Theorem \ref{thm:equal-gap} to two typical self-similar sets, we obtain the following corollary, which is useful to study the self-similarity of unions of self-similar set and its translations.

\begin{corollary}
  Let $f$ be a contractive similitude on $\R$.

  {\rm(i)} Given $\alpha, \beta > 0$ such that $\alpha \ne \beta$ and $\alpha + \beta < 1$, let $K_{\alpha,\beta}$ be the self-similar set generated by the IFS \[ \big\{ \phi_1(x) = \alpha x, \; \phi_2(x) = \beta x + 1-\beta \big\}. \]
  If $f(K_{\alpha,\beta}) \subset K_{\alpha,\beta}$, then there exists $\ii \in \Omega_2^*$ such that $ f(x) = \phi_{\ii}(x)$.

  {\rm(ii)} Given an integer $m \ge 2$ and $0 < \beta < 1/m$, let $\Gamma_{\beta,m}$ be the homogeneous self-similar set generated by the IFS \[ \Big\{ \phi_i(x) = \beta x + \frac{(i-1)(1-\beta)}{m-1} \Big\}_{i=1}^m .\]
  If $f(\Gamma_{\beta,m}) \subset \Gamma_{\beta,m}$, then there exists $\ii \in \Omega_m^*$ such that $f(x) = \phi_{\ii}(x)$ or $f(x) = \phi_{\ii}(1-x)$.
\end{corollary}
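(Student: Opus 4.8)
The plan is to recognize both families $K_{\alpha,\beta}$ and $\Gamma_{\beta,m}$ as special instances of the equal-gap IFS of Theorem \ref{thm:equal-gap}, and then in each case to decide whether the reflection symmetry $1-K=K$ holds, so as to invoke the correct branch (i) or (ii). All that is really needed is to match the parametrization and to settle the symmetry alternative.

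For part (i) I would set $m=2$, $\rho_1=\alpha$, $\rho_2=\beta$, so that the common gap is $\gamma=(1-\alpha-\beta)/(2-1)=1-\alpha-\beta$. A direct substitution shows that the maps produced by Theorem \ref{thm:equal-gap} are $\phi_1(x)=\alpha x$ and $\phi_2(x)=\beta x+\rho_1+\gamma=\beta x+1-\beta$, exactly the given IFS, while the hypotheses $0<\rho_i<1$ and $\rho_1+\rho_2<1$ follow from $\alpha,\beta>0$ and $\alpha+\beta<1$. It then remains to verify $1-K_{\alpha,\beta}\ne K_{\alpha,\beta}$. The cleanest route is via the complementary gaps: since $\alpha\in\phi_1(K_{\alpha,\beta})$ and $1-\beta\in\phi_2(K_{\alpha,\beta})$, the interval $(\alpha,1-\beta)$ is a bounded component of $[0,1]\setminus K_{\alpha,\beta}$ of length $\gamma$, whereas every other complementary component lies inside some $\phi_{\ii}([0,1])$ with $|\ii|\ge 1$ and hence has length at most $\max\{\alpha,\beta\}\,\gamma<\gamma$. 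Thus $(\alpha,1-\beta)$ is the unique longest gap. If $1-K_{\alpha,\beta}=K_{\alpha,\beta}$ held, the reflection $x\mapsto 1-x$ would permute complementary components preserving length and so would fix this gap, forcing $(\beta,1-\alpha)=(\alpha,1-\beta)$ and hence $\alpha=\beta$, contrary to hypothesis. Applying Theorem \ref{thm:equal-gap}(i) then yields $f(x)=\phi_{\ii}(x)$.

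For part (ii) I would take all $\rho_i=\beta$, so that $\gamma=(1-m\beta)/(m-1)$ and $\beta+\gamma=(1-\beta)/(m-1)$; substituting gives $\phi_i(x)=\beta x+(i-1)(\beta+\gamma)=\beta x+(i-1)(1-\beta)/(m-1)$, matching the given homogeneous IFS, while $0<\beta<1/m$ guarantees $\sum_i\rho_i=m\beta<1$. This time I expect $1-\Gamma_{\beta,m}=\Gamma_{\beta,m}$, and I would establish it through the conjugacy identity $g\circ\phi_i=\phi_{m+1-i}\circ g$, where $g(x)=1-x$: a one-line check shows both sides reduce to the same affine map. Hence $g$ conjugates the generating family to itself (permuting the maps $\phi_i\mapsto\phi_{m+1-i}$), so by uniqueness of the attractor $g(\Gamma_{\beta,m})=\Gamma_{\beta,m}$, i.e. $1-\Gamma_{\beta,m}=\Gamma_{\beta,m}$. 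Applying Theorem \ref{thm:equal-gap}(ii) gives $f(x)=\phi_{\ii}(x)$ or $f(x)=\phi_{\ii}(1-x)$.

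The only genuinely non-routine point, and hence the main obstacle, is settling the symmetry alternative $1-K=K$; everything else is bookkeeping to fit the parametrization of Theorem \ref{thm:equal-gap}. For (ii) the symmetry is transparent from the explicit conjugacy, but for (i) one must argue that $\alpha\ne\beta$ genuinely breaks the symmetry. The uniqueness of the longest complementary gap is the robust way to see this, since the mere failure of the reflection to conjugate the chosen IFS would not by itself preclude $K_{\alpha,\beta}$ from being symmetric through some other self-similar structure.
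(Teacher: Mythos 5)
Your proposal is correct and is essentially the intended argument: the paper states the corollary as a direct application of Theorem \ref{thm:equal-gap} without writing out a proof, and the only real content is verifying the parametrization and deciding the symmetry alternative, which you do correctly (the unique-longest-gap argument for $\alpha\ne\beta$ and the conjugacy $g\circ\phi_i=\phi_{m+1-i}\circ g$ for the homogeneous case are both sound).
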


Finally, we give an example to illustrate that Theorem \ref{thm:three-map} cannot be extended to general homogeneous self-similar sets generated by four maps.
The following example appeared in \cite[Example 6.2]{Feng-Wang-2009}, and the general situation was also investigated in \cite[Theorem 6.2]{Elekes-Keleti-Mathe-2010} and \cite[Theorem 1.5]{Yao-2015}.

\begin{example}\label{example}
  Let $K$ be the self-similar set generated by the IFS \[ \Phi=\Big\{ \phi_1(x) = \frac{x}{10}, \; \phi_2(x) = \frac{x+1}{10},\; \phi_3(x) = \frac{x+5}{10},\; \phi_4(x) = \frac{x+6}{10} \Big\}. \]
  Note that $10K = K +\{0,1,5,6\}$.
  Then we have \[ \phi_{13}(K) \cup \phi_{14}(K) \cup \phi_{21}(K)\cup \phi_{22}(K) = \frac{K + \{5,6,10,11\}}{100} = \frac{K}{10} + \frac{1}{20} \subset K. \]
  Let $g_1(x) = x /10 + 1/20$ and $g_2(x) = x /10 + 11/20$.
  Then we have $g_1(K) \subset K$ and $g_2(K) \subset K$, but the maps $g_1(x)$ and $g_2(x)$ cannot be derived from the IFS $\Phi$. Note that $K$ is symmetric, i.e., $2/3 - K = K$.
  Let $\sigma(x) = 2/3- x$.
  By Theorem 1.5 in \cite{Yao-2015}, each contractive similitude $f$ satisfying $f(K) \subset K$ has one of the following forms \[ \phi_{\ii},\; \phi_{\ii}\circ\sigma,\; \phi_{\ii} \circ g_1, \; \phi_{\ii} \circ g_1 \circ \sigma,\;  \phi_{\ii} \circ g_2, \; \phi_{\ii} \circ g_2\circ \sigma, \]
  where $\ii \in \Omega_4^*$.
\end{example}

The rest of paper is organized as follows.
In Section \ref{sec:pre}, we first introduce some notations and give several lemmas. Then we prove Theorem \ref{thm:equal-gap}, which implies Theorem \ref{thm:three-map} (ii).
At the end of Section \ref{sec:pre}, we reduce Theorem \ref{thm:three-map} (i) to the case $\rho \le \lambda < (1-\rho)/2$.
We will prove Theorem \ref{thm:three-map} (i) for $\rho < \lambda < (1-\rho)/2$ and $\lambda = \rho$ in Section \ref{sec:SSC} and \ref{sec:OSC}, respectively.

\section{Preliminaries}\label{sec:pre}

In this paper, all compact sets we refer to contain at least two points.
For a compact set $K \subset \R$, let $\mathrm{Conv}(K)$ denote the convex hull of $K$, which is the smallest closed interval containing $K$.
Then $\mathrm{Conv}(K) \setminus K$ can be written as a union of disjoint open intervals, each of which is called a \emph{gap} of $K$.
Define $\mathfrak{g}(K)$ to be the largest length of gaps of $K$.
For a compact interval $K\subset \R$, we define $\mathfrak{g}(K)$ to be $0$.

For a finite word $\ii = i_1 i_2 \ldots i_n \in \Omega^*$, let $|\ii|$ denote the length of $\ii$.
We also use $|I|$ to denote the length of an bounded interval $I \subset \R$.

For a non-empty subset $F \subset \R$ and $\delta>0$, the \emph{$\delta$-neighborhood} of $F$ is defined by
\[ F^\delta:=\big\{ y \in \R: \;\text{there exists}\; x\in F\;\text{such that}\; |y-x|<\delta \big\}. \]
For two non-empty subsets $F_1,F_2 \subset \R$, define \[\mathrm{dist}(F_1,F_2):= \inf\big\{ |x-y|: x\in F_1, y \in F_2 \big\}.\]
It is easy to verify that
\begin{equation}\label{eq:dist}
  \mathrm{dist}(F_1,F_2) = \inf\big\{\delta>0: F_1^{\delta/2} \cap F_2^{\delta/2} \ne \emptyset\big\},
\end{equation}
and for a compact set $K \subset \R$,
\begin{equation}\label{eq:g-K}
  \mathfrak{g}(K) = \inf\big\{ \delta>0: K^{\delta/2} \;\text{is an open interval} \big\}.
\end{equation}

Next, we give several lemmas which will be used in the subsequent proof.

\begin{lemma}\label{gap-lemma}
  Let $F,K\subset \R$ be two compact sets with $F \subset K$.
  If the set $K$ can be written as $K= K_1 \cup K_2 \cup \cdots \cup K_\ell$ with $K_i \ne \emptyset$ for all $1\le i \le \ell$ and \[ \mathfrak{g}(F) < \min_{1\le i < j \le \ell}\mathrm{dist}(K_i, K_j), \]
  then there exists $i \in \{1,2,\ldots,\ell\}$ such that $F\subset K_i$.
\end{lemma}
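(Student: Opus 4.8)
The plan is to argue by separation, using the two neighborhood characterizations recorded in (\ref{eq:dist}) and (\ref{eq:g-K}). The geometric intuition is that if $F$ were to meet two distinct pieces $K_i$ and $K_j$, then $F$ would have to ``jump across'' the region between them, forcing a gap of $F$ of length at least $\mathrm{dist}(K_i,K_j) \ge \min_{p<q}\mathrm{dist}(K_p,K_q) > \mathfrak{g}(F)$, which is impossible. To turn this into a clean argument I would fix a single scale $\delta$ with
\[ \mathfrak{g}(F) < \delta < \min_{1\le i<j\le \ell}\mathrm{dist}(K_i,K_j), \]
which exists by hypothesis, and then work entirely with the $\delta/2$-neighborhoods.

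First I would record two consequences of this choice. Since $\delta > \mathfrak{g}(F)$, equation (\ref{eq:g-K}) shows that $F^{\delta/2}$ is an open interval, hence a connected set. Here I would note that the set of admissible scales in (\ref{eq:g-K}) is upward closed, because the $r$-neighborhood of an interval is again an interval; thus $\delta$ lying strictly above the infimum genuinely yields an interval. Likewise, since $\delta < \mathrm{dist}(K_i,K_j)$ for every $i \ne j$, equation (\ref{eq:dist}) gives $K_i^{\delta/2}\cap K_j^{\delta/2} = \emptyset$, so the open sets $K_1^{\delta/2},\ldots,K_\ell^{\delta/2}$ are pairwise disjoint.

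Next, using $F\subset K = \bigcup_{i=1}^\ell K_i$ together with the facts that the neighborhood operation is monotone and commutes with finite unions, I obtain $F^{\delta/2} \subset \bigcup_{i=1}^\ell K_i^{\delta/2}$. Now the connected set $F^{\delta/2}$ is covered by the pairwise disjoint open sets $K_i^{\delta/2}$; the traces $F^{\delta/2}\cap K_i^{\delta/2}$ are relatively open in $F^{\delta/2}$, pairwise disjoint, and partition it, so connectedness forces all but one of them to be empty. Hence there is a single index $i$ with $F^{\delta/2}\subset K_i^{\delta/2}$, and in particular $F\subset K_i^{\delta/2}$.

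The final step, which is the only place requiring a little care, is to pass from $F\subset K_i^{\delta/2}$ back to the desired $F\subset K_i$, since containment in the enlarged neighborhood is a priori weaker. For this I would intersect with $K$ and invoke disjointness once more: because $K_j\subset K_j^{\delta/2}$ and $K_j^{\delta/2}\cap K_i^{\delta/2}=\emptyset$ for $j\ne i$, we have $K_j\cap K_i^{\delta/2}=\emptyset$, whence $K\cap K_i^{\delta/2} = K_i$. Since $F\subset K$ and $F\subset K_i^{\delta/2}$, this yields $F\subset K_i$, as required. I expect the genuine content to be minimal: the only subtleties are justifying the strict-inequality/upward-closedness remarks attached to (\ref{eq:dist}) and (\ref{eq:g-K}), and the concluding recovery of $F\subset K_i$ from the neighborhood containment.
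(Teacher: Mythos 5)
Your proof is correct and uses essentially the same mechanism as the paper: choose $\delta$ strictly between $\mathfrak{g}(F)$ and the minimal distance, use (\ref{eq:g-K}) to see that $F^{\delta/2}$ is an interval, use (\ref{eq:dist}) to get disjointness of the relevant $\delta/2$-neighborhoods, and conclude by connectedness. The only cosmetic difference is that you argue directly with the $\ell$ pairwise disjoint sets $K_i^{\delta/2}$ (and then correctly recover $F\subset K_i$ from $F\subset K_i^{\delta/2}$ via $K\cap K_i^{\delta/2}=K_i$), whereas the paper argues by contradiction after splitting $F$ into just two pieces.
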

\begin{proof}
  Suppose on the contrary that there exist $i_1 \ne i_2 \in \{1,2,\ldots,\ell\}$ such that $F \cap K_{i_1} \ne \emptyset$ and $F\cap K_{i_2} \ne \emptyset$.
  Let $F_1= F \cap K_{i_1}$ and $F_2 = F \setminus K_{i_1}$.
  Then we have $F_1 \ne \emptyset$, $F_2\ne \emptyset$, and
  \[ \mathrm{dist}(F_1,F_2) \ge \min_{i \ne i_i} \mathrm{dist}(K_{i_1},K_i) > \mathfrak{g}(F).\]
  We can choose $\delta >0$ such that \[ \mathfrak{g}(F) < \delta < \mathrm{dist}(F_1,F_2). \]
  By (\ref{eq:g-K}), the set $F^{\delta/2} $ is an open interval.
  Since $F = F_1 \cup F_2$, we have \[F^{\delta/2} = F_1^{\delta/2} \cup F_2^{\delta/2}.\]
  Note that $F_1^{\delta/2}$ and $F_2^{\delta/2}$ are two non-empty open sets.
  Thus, the fact that $F^{\delta/2} $ is an open interval implies $F_1^{\delta/2} \cap F_2^{\delta/2} \ne \emptyset$.
  It follows from (\ref{eq:dist}) that $\mathrm{dist}(F_1,F_2) \le \delta$.
  This leads to a contradiction.
  We complete the proof.
\end{proof}

\begin{lemma}\label{lemma:dichotomy}
  Suppose that $\rho \le \lambda < (1-\rho)/2$, and write $E=E_{\rho,\lambda}$.
  Let $f(x) = r x + t$ with $r \ne 0$ and $t\in \R$.

  {\rm(i)} If $|r|<1$ and $f(E) \subset E$, then we have $f(E) \subset \varphi_1(E) \cup \varphi_2(E)$ or $f(E) \subset \varphi_3(E)$.

  {\rm(ii)} If $f\big( \varphi_1(E) \cup \varphi_2(E) \big) \subset E$, then we have $f\big( \varphi_1(E) \cup \varphi_2(E) \big) \subset \varphi_1(E) \cup \varphi_2(E)$ or $f\big( \varphi_1(E) \cup \varphi_2(E) \big) \subset \varphi_3(E)$.
\end{lemma}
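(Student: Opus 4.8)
The plan is to exploit the block decomposition $E = G \cup \varphi_3(E)$, where $G := \varphi_1(E) \cup \varphi_2(E)$, together with the gap lemma (Lemma \ref{gap-lemma}). Writing $g := \mathfrak{g}(E)$, one first records the elementary geometry: $\mathrm{Conv}(G) = [0,\lambda+\rho]$ and $\mathrm{Conv}(\varphi_3(E)) = [1-\rho,1]$, so the two blocks are separated by the middle gap, $\mathrm{dist}(G,\varphi_3(E)) = (1-\rho)-(\lambda+\rho) = 1-2\rho-\lambda$. Since the middle gap exceeds both the left gap $\lambda-\rho$ (because $\lambda<(1-\rho)/2$) and every internal gap (which is at most $\rho\,\mathfrak{g}(E)<\mathfrak{g}(E)$), it is the largest gap, i.e. $g = 1-2\rho-\lambda>0$. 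A direct computation of the gaps of $G$ gives $\mathfrak{g}(G) = \max\{\lambda-\rho,\ \rho g\}$, and both terms are strictly below $g$ — the first by $\lambda<(1-\rho)/2$, the second by $\rho<1$ — so $\mathfrak{g}(G)<g$. With this in hand, part (i) is immediate: as $|r|<1$ we have $\mathfrak{g}(f(E)) = |r|\,\mathfrak{g}(E) = |r|\,g < g = \mathrm{dist}(G,\varphi_3(E))$, and Lemma \ref{gap-lemma} applied to $F=f(E)$ and the splitting $E=G\cup\varphi_3(E)$ yields $f(E)\subset G$ or $f(E)\subset\varphi_3(E)$.

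For part (ii) I would again aim to apply Lemma \ref{gap-lemma} to $F=f(G)$ and the same splitting, for which it suffices to show $\mathfrak{g}(f(G)) = |r|\,\mathfrak{g}(G) < g$. The main obstacle is that (ii) provides no contraction hypothesis, so this inequality is not automatic, and a crude bound $|r|\le 1/(\lambda+\rho)$ from $\mathrm{Conv}(f(G))\subset[0,1]$ is genuinely too weak. I would therefore argue by contradiction: assume $f(G)\not\subset G$ and $f(G)\not\subset\varphi_3(E)$. As $f(G)\subset E=G\cup\varphi_3(E)$ with the blocks disjoint, $f(G)$ must meet both $G$ and $\varphi_3(E)$; since $f(G)$ has no points in the open middle gap $(\lambda+\rho,1-\rho)$, this whole interval lies inside a single gap of $f(G)$, forcing $\mathfrak{g}(f(G)) = |r|\,\mathfrak{g}(G)\ge g$ and hence, using $\mathfrak{g}(G)<g$, the inequality $|r|>1$.

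The heart of the matter is to rule out this straddling configuration, and the key is the two‑piece structure $f(G)=f(\varphi_1(E))\cup f(\varphi_2(E))$, in which each piece is a similar copy of $E$ scaled by $|r|\rho$ and the pieces are separated by a gap of length $|r|(\lambda-\rho)$. The interval $(\lambda+\rho,1-\rho)$ sits in exactly one gap of $f(G)$, giving two cases. If it lies in the gap between the two pieces, then one piece lies in $G$ and the other in $\varphi_3(E)$; the piece contained in $\varphi_3(E)$ is a copy of $E$ with $\diam f(\varphi_i(E)) = |r|\rho \le \diam\varphi_3(E) = \rho$, forcing $|r|\le 1$, a contradiction. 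If instead it lies in an internal gap of one piece, that gap has length at most $|r|\rho\, g$, so $|r|\rho\,g\ge g$ gives $|r|\rho\ge 1$, while $f(\varphi_i(E))\subset E$ forces $|r|\rho=\diam f(\varphi_i(E))\le\diam E=1$; hence $|r|\rho=1$ and $\diam f(G)=|r|(\lambda+\rho)\ge 2|r|\rho=2>1$, contradicting $f(G)\subset[0,1]$. Since reversing the orientation of $f$ only interchanges the two pieces, the case $r<0$ is identical. Both cases being impossible, the straddling configuration cannot occur, so $\mathfrak{g}(f(G))<g$ and Lemma \ref{gap-lemma} finishes the proof. In short, the obstacle is the missing contraction hypothesis, and it is overcome not by estimating $|r|$ directly but by length-and-position bookkeeping on the two pieces of $f(G)$.
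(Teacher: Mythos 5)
Your proof is correct and rests on the same ingredients as the paper's: the decomposition $E=\big(\varphi_1(E)\cup\varphi_2(E)\big)\cup\varphi_3(E)$, the computation $\mathfrak{g}(E)=1-2\rho-\lambda>\mathfrak{g}\big(\varphi_1(E)\cup\varphi_2(E)\big)$, and Lemma \ref{gap-lemma}; part (i) is identical. For part (ii) the paper organizes things differently — it first notes $|r|\rho<1$, applies part (i) to $f\circ\varphi_1$ and $f\circ\varphi_2$, and splits on whether $f\big(\varphi_1(E)\cup\varphi_2(E)\big)$ meets $\varphi_3(E)$ — but the decisive step is the same as yours (a piece $f(\varphi_i(E))$ trapped in $\varphi_3(E)$ forces $|r|\rho\le\rho$, hence $|r|\le 1$, after which the gap lemma applies), so your case analysis on which gap of $f\big(\varphi_1(E)\cup\varphi_2(E)\big)$ absorbs the middle gap is essentially a reorganization of the same argument.
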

\begin{proof}
  Let $E_1 = \varphi_1(E) \cup \varphi_2(E)$ and $E_2=\varphi_3(E)$. Then we have $E = E_1 \cup E_2$.

  (i) Since $\rho \le \lambda < (1-\rho)/2$ and $|r| < 1$, we have \[ \mathrm{dist}(E_1, E_2)= \mathfrak{g}(E) > \mathfrak{g}\big( f(E) \big). \]
  The desired result follows directly from Lemma \ref{gap-lemma}.

  (ii) Since $f\big( \varphi_1(E) \cup \varphi_2(E) \big) \subset E$, we have $|r|\rho < 1$.
  By applying (i) for the map $f\circ \varphi_1$, we have $f\circ \varphi_1(E) \subset E_1$ or $f\circ \varphi_1(E) \subset E_2$.
  Similarly, we also have $f\circ \varphi_2(E) \subset E_1$ or $f\circ \varphi_2(E) \subset E_2$.

  If $f\big( \varphi_1(E) \cup \varphi_2(E) \big) \cap E_2 = \emptyset$, then we conclude directly that $f\big( \varphi_1(E) \cup \varphi_2(E) \big) \subset E_1$.
  If $f\big( \varphi_1(E) \cup \varphi_2(E) \big) \cap E_2 \ne \emptyset$, then there must be $i \in \{1,2\}$ such that
  $f\circ \varphi_i(E) \subset E_2 = \varphi_3(E)$.
  It follows that $|r|\le 1$.
  Since $\rho \le \lambda < (1-\rho)/2$, we have
  \[ \mathfrak{g} \big( f\big( \varphi_1(E) \cup \varphi_2(E) \big) \big) \le \mathfrak{g} \big( \varphi_1(E) \cup \varphi_2(E) \big) < \mathfrak{g}(E) = \mathrm{dist}(E_1, E_2).\]
  By Lemma \ref{gap-lemma}, we conclude that $f\big( \varphi_1(E) \cup \varphi_2(E) \big) \subset E_2$.
\end{proof}

A set $K \subset \R$ is called \emph{symmetric} if there exists $t_0 \in \R$ such that $t_0- K = K$.
\begin{lemma}\label{lemma:symmetric}
  Suppose that $\rho \le \lambda < (1-\rho)/2$.
  Then the sets $E_{\rho,\lambda}$ and $\varphi_1(E_{\rho,\lambda}) \cup \varphi_2(E_{\rho,\lambda})$ are not symmetric.
\end{lemma}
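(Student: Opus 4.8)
The plan is to reduce both non-symmetry statements to the single fact that $1-E \ne E$, where $E := E_{\rho,\lambda}$, and to prove that fact by locating the unique largest gap of $E$. Since $\varphi_1([0,1])=[0,\rho]$, $\varphi_2([0,1])=[\lambda,\lambda+\rho]$ and $\varphi_3([0,1])=[1-\rho,1]$, the complement $[0,1]\setminus E$ has exactly two level-one gaps, namely $(\rho,\lambda)$ of length $\lambda-\rho$ and $(\lambda+\rho,1-\rho)$ of length $1-2\rho-\lambda$; every other gap lies inside some $\varphi_i([0,1])$ and is thus a $\rho$-scaled copy of a gap of $E$, hence has length at most $\rho\,\mathfrak{g}(E)<\mathfrak{g}(E)$. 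As $\lambda<(1-\rho)/2$ gives $1-2\rho-\lambda>\lambda-\rho$, the right gap $G:=(\lambda+\rho,1-\rho)$ is strictly the longest, so it is the unique largest gap of $E$, with $\mathfrak{g}(E)=1-2\rho-\lambda$ and midpoint $(1+\lambda)/2$.

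To see that $E$ is not symmetric, suppose $t_0-E=E$. Taking convex hulls forces $t_0-[0,1]=[0,1]$, so $t_0=1$ and $1-E=E$. The reflection $x\mapsto 1-x$ permutes the gaps of $E$ while preserving their lengths, so it must fix the unique largest gap $G$; then the midpoint $(1+\lambda)/2$ of $G$ would have to equal $1/2$, forcing $\lambda=0$ and contradicting $\lambda\ge\rho>0$. This proves $E$ is not symmetric and records $1-E\ne E$ for the next step.

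For $F:=\varphi_1(E)\cup\varphi_2(E)$ one has $\mathrm{Conv}(F)=[0,\lambda+\rho]$, so any symmetry $t_0-F=F$ must satisfy $t_0=\lambda+\rho$ and reflect about $c:=(\lambda+\rho)/2$ via $R(x)=(\lambda+\rho)-x$. The crux is that $R$ interchanges the two pieces: $\varphi_1(E)\subset[0,\rho]$, $\varphi_2(E)\subset[\lambda,\lambda+\rho]$, and $R$ carries $[0,\rho]$ onto $[\lambda,\lambda+\rho]$, so intersecting $F=R(F)$ with $[0,\rho]$ gives $R(\varphi_1(E))=\varphi_2(E)$, i.e. $(\lambda+\rho)-\rho E=\rho E+\lambda$, which simplifies to $1-E=E$---contradicting the previous paragraph. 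The main obstacle is exactly this swapping step: when $\rho<\lambda$ it is immediate because $c$ lies strictly inside the separating gap $(\rho,\lambda)$ and $[0,\rho]\cap[\lambda,\lambda+\rho]=\emptyset$, whereas in the boundary case $\lambda=\rho$ the two pieces meet at the single point $\rho=c$, and one must check that intersecting $F=R(F)$ with $[0,\rho]$ still isolates $\varphi_1(E)$ (the shared point being fixed by $R$) so that the reduction to $1-E=E$ goes through unchanged.
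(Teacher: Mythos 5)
Your proof is correct and follows essentially the same route as the paper: both arguments reduce the symmetry of $\varphi_1(E)\cup\varphi_2(E)$ to the identity $1-E=E$ by intersecting the reflected set with $[0,\rho]$, after first ruling out $1-E=E$. The only difference is in that first step --- the paper exhibits the explicit interval $(\lambda,\,1-\rho-\lambda)$ which meets $E$ but not $1-E$, while you use the midpoint of the unique largest gap --- and your parenthetical check in the boundary case $\lambda=\rho$ (the shared point $\rho$ is fixed by the reflection and already lies in $\varphi_1(E)$) is precisely the verification that the paper's displayed chain of equalities silently relies on.
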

\begin{proof}
  For $\delta = 1-\rho - 2\lambda$, we have $E_{\rho,\lambda} \cap (\lambda, \lambda +\delta) \ne \emptyset$ but $(1-E_{\rho,\lambda}) \cap (\lambda,\lambda+\delta) =\emptyset$. It follows that $1-E_{\rho,\lambda} \ne E_{\rho,\lambda}$, and hence, the set $E_{\rho,\lambda}$ is not symmetric.

Suppose that the set $\varphi_1(E_{\rho,\lambda}) \cup \varphi_2(E_{\rho,\lambda})$ is symmetric.
Then $(\lambda+\rho) - \big( \varphi_1(E_{\rho,\lambda}) \cup \varphi_2(E_{\rho,\lambda}) \big) = \varphi_1(E_{\rho,\lambda}) \cup \varphi_2(E_{\rho,\lambda})$.
It follows that
\begin{align*}
  \rho E_{\rho,\lambda} & = \varphi_1(E_{\rho,\lambda}) = \big( \varphi_1(E_{\rho,\lambda}) \cup \varphi_2(E_{\rho,\lambda}) \big) \cap [0,\rho]  \\
  & = \big( (\lambda+\rho) - ( \varphi_1(E_{\rho,\lambda}) \cup \varphi_2(E_{\rho,\lambda})) \big)  \cap [0,\rho] \\
  & = (\lambda+\rho) - \varphi_2(E_{\rho,\lambda}) \\
  & = \rho(1-E_{\rho,\lambda}).
\end{align*}
That is, $1-E_{\rho,\lambda}=E_{\rho,\lambda}$, a contradiction.
Thus, the set $\varphi_1(E_{\rho,\lambda}) \cup \varphi_2(E_{\rho,\lambda})$ is not symmetric.
\end{proof}

\begin{lemma}\label{lemma:subset-to-equal}
  Let $f(x) = rx + t$ with $|r|=1$ and $t \in \R$.
  If $K \subset \R$ is non-empty and $f(K) \subset K$, then we have $f(K) = K$.
\end{lemma}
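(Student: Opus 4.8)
The plan is to exploit that $|r|=1$ forces $f$ to be an isometry of $\R$, so the only failure mode is that $f$ pushes $K$ strictly inside itself; I will rule this out using the boundedness of $K$ (as with all compact sets in this paper, $\mathrm{Conv}(K)$ is a bounded interval).

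First I would pass to the convex hull. Writing $I := \mathrm{Conv}(K)$, since $f$ is affine it commutes with taking convex hulls, so $f(I) = \mathrm{Conv}(f(K))$. From $f(K)\subset K$ we get $f(I)\subset I$, while $|f(I)| = |r|\,|I| = |I|$. A closed subinterval of $I$ having the same length as $I$ must coincide with $I$, so $f(I)=I$.

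Next I would read off the structure of $f$ from $f(I)=I$. If $r=1$ then $f$ is the translation $x\mapsto x+t$, and $f(I)=I$ forces $t=0$, i.e. $f=\mathrm{id}$; if $r=-1$ then $f$ is the reflection $x\mapsto (a+b)-x$ about the midpoint of $I=[a,b]$. In either case $f\circ f=\mathrm{id}$. Finally, knowing $f^2=\mathrm{id}$, I would reverse the given inclusion: applying $f$ to $f(K)\subset K$ yields $f^2(K)\subset f(K)$, that is $K\subset f(K)$, which together with $f(K)\subset K$ gives $f(K)=K$.

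The genuine content is confined to the $r=1$ branch, where one must use that $K$ is bounded. Indeed, without boundedness the statement is false (e.g. $K=[0,\f)$ and $f(x)=x+1$), so the convex-hull/length argument --- or, equivalently, the observation that $K+nt\subset K$ for all $n\ge 0$ would force $\sup K=\f$ unless $t=0$ --- is exactly where compactness enters. The reflection case $r=-1$, by contrast, is purely formal, since there $f^2=\mathrm{id}$ holds algebraically and the conclusion needs no boundedness or separation hypothesis at all.
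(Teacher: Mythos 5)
Your proof is correct and follows essentially the same route as the paper's: establish that $f^2=\mathrm{id}$ (the paper computes $f^2(x)=x+rt+t$ directly and invokes boundedness of $K$ to kill the translation part, where you detour through $f(\mathrm{Conv}(K))=\mathrm{Conv}(K)$) and then reverse the inclusion via $f^2(K)\subset f(K)\subset K$. Your side remark that boundedness is genuinely needed --- the statement fails for $K=[0,\infty)$ and $f(x)=x+1$ --- is accurate, and the lemma's hypothesis ``non-empty'' should indeed be read as ``non-empty compact,'' in line with the paper's standing convention on compact sets.
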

\begin{proof}
  Note that $f^2(x) = x + rt + t$ and $f^2(K) \subset f(K) \subset K$. Then we must have $f^2(x) = x$ and $f^2(K) = K$, which implies that $f(K) = K$.
\end{proof}

Now, we can prove Theorem \ref{thm:equal-gap} by using Lemma \ref{gap-lemma}.

\begin{proof}[Proof of Theorem \ref{thm:equal-gap}]
  Note that $\mathrm{Conv}(K) = [0,1]$, and the gaps between \[ \phi_1([0,1]),\; \phi_2([0,1]),\; \ldots,\; \phi_m([0,1]) \]
  are equal to $\gamma$. Then we have $\mathfrak{g}(K) = \gamma$.
  For $1 \le i \le m$, let $K_i = \phi_{i}(K)$.
  Then we have \[ \min_{1\le i < j \le \ell}\mathrm{dist}(K_i, K_j) = \gamma = \mathfrak{g}(K) > \mathfrak{g}\big( f(K) \big).  \]
  By Lemma \ref{gap-lemma}, there exists $i_1 \in \{1,2,\ldots,m\}$ such that $f(K) \subset \phi_{i_1}(K)$.
  Let $\ii \in \Omega_m^*$ be the longest word such that $f(K) \subset \phi_{\ii}(K)$.
  If the similitude $\phi_{\ii}^{-1} \circ f$ is contractive, then by the same argument for $\phi_{\ii}^{-1} \circ f$, there exists $i_0 \in \{1,2,\ldots,m\}$ such that $\phi_{\ii}^{-1} \circ f(K) \subset \phi_{i_0}(K)$, i.e., $f(K) \subset \phi_{\ii i_0}(K)$, which contradicts the longest length of $\ii$.
  Thus, the similitude ratio of $\phi_{\ii}^{-1} \circ f$ is $1$.
  By Lemma \ref{lemma:subset-to-equal}, we conclude that $\phi_{\ii}^{-1} \circ f(K)= K$, i.e., $f(K)= \phi_{\ii}(K)$.
\end{proof}

In the end of this section, we reduce the part (i) of Theorem \ref{thm:three-map}.

\begin{lemma}\label{lemma:reduce}
  It suffices to prove Theorem \ref{thm:three-map} (i) only for $\rho\le \lambda < (1-\rho)/2$.
\end{lemma}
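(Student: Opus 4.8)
The plan is to exploit the reflection $\sigma(x) = 1-x$, which turns $E_{\rho,\lambda}$ into a set of the same family but with the reflected parameter $\lambda' := 1-\rho-\lambda$. First I would conjugate the three generators by $\sigma$: a direct computation gives
\[ \sigma \circ \varphi_1 \circ \sigma(x) = \rho x + (1-\rho), \quad \sigma \circ \varphi_2 \circ \sigma(x) = \rho x + \lambda', \quad \sigma \circ \varphi_3 \circ \sigma(x) = \rho x. \]
Hence $\{\sigma \varphi_i \sigma\}_{i=1}^3$ is exactly the IFS $\Phi_{\rho,\lambda'} = \{\psi_1,\psi_2,\psi_3\}$ after the relabeling $\psi_1 = \sigma \varphi_3 \sigma$, $\psi_2 = \sigma \varphi_2 \sigma$, $\psi_3 = \sigma \varphi_1 \sigma$; equivalently $\psi_i = \sigma \circ \varphi_{\tau(i)} \circ \sigma$, where $\tau$ is the transposition of $\{1,2,3\}$ swapping $1$ and $3$ and fixing $2$. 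Since $\sigma$ is an involution, applying $\sigma$ to the invariance relation $E_{\rho,\lambda} = \bigcup_i \varphi_i(E_{\rho,\lambda})$ shows that $\sigma(E_{\rho,\lambda})$ is invariant under $\Phi_{\rho,\lambda'}$, so by uniqueness of the attractor $\sigma(E_{\rho,\lambda}) = E_{\rho,\lambda'}$.

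Next I would record that $\lambda \mapsto 1-\rho-\lambda$ is an involution of $[\rho, 1-2\rho]$ whose only fixed point is $(1-\rho)/2$; it therefore interchanges the sub-ranges $[\rho,(1-\rho)/2)$ and $((1-\rho)/2, 1-2\rho]$. So it suffices to deduce the range $((1-\rho)/2, 1-2\rho]$ from the range $[\rho,(1-\rho)/2)$. Assume Theorem \ref{thm:three-map}(i) holds whenever $\rho \le \lambda' < (1-\rho)/2$, and take $\lambda$ with $(1-\rho)/2 < \lambda \le 1-2\rho$, so that $\lambda' = 1-\rho-\lambda \in [\rho,(1-\rho)/2)$. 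Given a contractive similitude $f$ with $f(E_{\rho,\lambda}) \subset E_{\rho,\lambda}$, set $g := \sigma \circ f \circ \sigma$. Conjugation by the isometry $\sigma$ leaves the similitude ratio unchanged, so $g$ is again a contractive similitude; and since $\sigma(E_{\rho,\lambda'}) = E_{\rho,\lambda}$, we obtain
\[ g(E_{\rho,\lambda'}) = \sigma\big( f(\sigma(E_{\rho,\lambda'})) \big) = \sigma\big( f(E_{\rho,\lambda}) \big) \subset \sigma(E_{\rho,\lambda}) = E_{\rho,\lambda'}. \]

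Now the assumed case applied to $g$ and $\lambda'$ yields a word $\jj \in \Omega_3^*$ with $g = \psi_{\jj}$. Conjugating back and using $\psi_i = \sigma \varphi_{\tau(i)} \sigma$ together with $\sigma^2 = \mathrm{id}$, the intermediate factors telescope and one finds $\psi_{\jj} = \sigma \circ \varphi_{\tau(\jj)} \circ \sigma$, where $\tau(\jj)$ denotes the word obtained by applying $\tau$ to each letter of $\jj$. Hence $f = \sigma \circ g \circ \sigma = \varphi_{\tau(\jj)}$, and taking $\ii = \tau(\jj)$ gives $f = \varphi_{\ii}$ with $\ii \in \Omega_3^*$, as required.

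Since this is a pure symmetry reduction, I do not expect a genuine obstacle; the points needing care are purely bookkeeping. One must verify the three conjugation identities for the generators and correctly propagate the index reversal $\tau$ through words, and one must confirm that the reflected parameter $\lambda'$ really lands in the already-treated interval $[\rho,(1-\rho)/2)$, which is precisely the statement that the involution has fixed point $(1-\rho)/2$. The excluded value $\lambda = (1-\rho)/2$ is exactly this fixed point of $\sigma$, i.e. the symmetric case, and is handled separately by Theorem \ref{thm:equal-gap}, so it legitimately falls outside part (i).
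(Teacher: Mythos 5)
Your proposal is correct and is essentially the paper's own argument: the paper also reflects by $\sigma(x)=1-x$, observes that $1-E_{\rho,\lambda}$ is the attractor of $\Phi_{\rho,1-\rho-\lambda}$ with $1-\rho-\lambda\in[\rho,(1-\rho)/2)$, applies the assumed case to $g=\sigma\circ f\circ\sigma$, and converts the resulting word back via the letter swap $i\mapsto 4-i$ (your $\tau$). The only difference is presentational — the paper carries out the conjugation in explicit coordinates rather than via the telescoping identity $\psi_{\jj}=\sigma\circ\varphi_{\tau(\jj)}\circ\sigma$.
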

\begin{proof}
  Suppose that we have proved Theorem \ref{thm:three-map} (i) for $\rho\le \lambda < (1-\rho)/2$.
  In the following, we assume $(1-\rho)/2 < \lambda \le 1-2\rho$ and write $E = E_{\rho,\lambda}$.
  Let $f(x) = r x + t$ with $0< |r| < 1$ and $t \in \R$ such that $f(E) \subset E$.

  Let $F = 1- E$. Then the set $F$ is the self-similar set generated by the IFS \[ \big\{ \phi_1(x) = \rho x,\;\phi_2(x) = \rho x + 1-\rho - \lambda,\;\phi_3(x) = \rho x + 1- \rho \big\}. \]
  Note that $\rho\le 1-\rho - \lambda < (1-\rho)/2$.
  Let $g(x) = f(x) + 1 - r - 2 t = r x + 1- r - t$.
  Then we have \[ g(F) = r(1-E) + 1- r - t = 1- (rE+t) \subset 1-E = F. \]
  By the assumption, there exists $\ii = i_1 i_2 \ldots i_n \in \Omega_3^*$ such that $g(x) = \phi_{\ii}(x)$.
  That is, $r = \rho^n$ and $1-r - t = \phi_{\ii}(0)$.

  Let $\jj = j_1 j_2 \ldots j_n \in \Omega_3^*$ where $j_k = 4 - i_k$ for all $1 \le k \le n$.
  Note that $(1-\rho) - \phi_{i_k}(0) = \varphi_{j_k}(0)$ for all $1 \le k \le n$.
  Thus we have \[ t = (1-r) - \phi_{\ii}(0) = \sum_{k=1}^{n} (1-\rho)\rho^{k-1} - \sum_{k=1}^{n} \phi_{i_k}(0) \rho^{k-1} = \sum_{k=1}^{n} \varphi_{j_k}(0) \rho^{k-1} =\varphi_{\jj}(0). \]
  It follows that $f(x) = \rho^n x + \varphi_{\jj}(0) = \varphi_{\jj}(x)$, as desired.
\end{proof}

The part (ii) of Theorem \ref{thm:three-map} follows directly from Theorem \ref{thm:equal-gap}.
We will prove Theorem \ref{thm:three-map} (i) for $\rho< \lambda < (1-\rho)/2$ and $\lambda = \rho$ in Section \ref{sec:SSC} and \ref{sec:OSC}, respectively.
It this were done, by Lemma \ref{lemma:reduce} we would be able to prove Theorem \ref{thm:three-map} (i), and hence, we would complete the proof of Theorem \ref{thm:three-map}.

\section{Self-embeddings for $\rho< \lambda < (1-\rho)/2$}\label{sec:SSC}

In this section we will prove Theorem \ref{thm:three-map} (i) for $\rho< \lambda < (1-\rho)/2$.
We always assume that $\rho < \lambda < (1-\rho)/2$ and write $E=E_{\rho,\lambda}$ and $\Omega^* = \Omega_3^*$.
Let $G_1 := (\rho,\lambda)$ and $G_2 := (\lambda+\rho, 1-\rho)$. 
\begin{figure}[h!]
\begin{center}
\begin{tikzpicture}[
    scale=10,
    axis/.style={very thick, ->},
    important line/.style={very thick},
    every node/.style={color=black}
    ]

    \draw[{|-|}, important line] (0, 0)--(1, 0);
    \node[above,scale=1pt]at(0,0.02){$0$};
    \node[above,scale=1pt]at(1,0.02){$1$};

    \draw[{|-|}, important line] (0, -0.05)--(0.25, -0.05);
    \node[above,scale=1pt]at(0.12,-0.13){$\varphi_1([0,1])$};
    \draw[{|-|}, important line] (0.35, -0.05)--(0.6, -0.05);
    \node[above,scale=1pt]at(0.47,-0.13){$\varphi_2([0,1])$};
    \draw[{|-|}, important line] (0.75, -0.05)--(1, -0.05);
    \node[above,scale=1pt]at(0.87,-0.13){$\varphi_3([0,1])$};

    \node[above,scale=1pt]at(0.3,-0.085){$G_1$};
    \node[above,scale=1pt]at(0.675,-0.085){$G_2$};
\end{tikzpicture}
\end{center}
\caption{The first iteration for $E_{\rho,\lambda}$ when $\rho< \lambda < (1-\rho)/2$}\label{fig:1}
\end{figure}
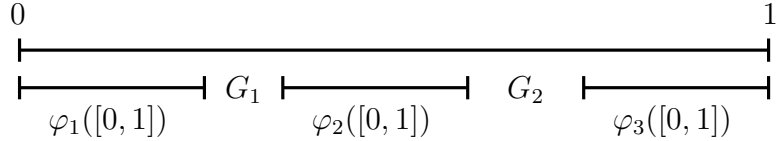

It is easy to verify that $\mathrm{Conv}(E) = [0,1]$ and \[ [0,1] \setminus E = \bigcup_{\ii \in \Omega^*} \big( \varphi_{\ii}(G_1) \cup \varphi_{\ii}(G_2) \big). \]
Since $\rho < \lambda < (1-\rho)/2$, we have \[ \mathfrak{g}(E) = |G_2| > |G_1| >0.\]

\begin{proposition}\label{prop:positive-1}
  Let $f(x) = r x + t$ with $0 < r < 1$ and $t \in \R$. If $f(E) \subset E$, then there exists $i \in \{1,2,3\}$ such that $f(E) \subset \varphi_{i}(E)$.
\end{proposition}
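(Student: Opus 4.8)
The plan is to peel off one level with Lemma \ref{lemma:dichotomy}(i) and then localize inside the ``left block'' $E_1:=\varphi_1(E)\cup\varphi_2(E)$. By Lemma \ref{lemma:dichotomy}(i), either $f(E)\subset\varphi_3(E)$, in which case we are done with $i=3$, or $f(E)\subset E_1$. So I would assume $f(E)\subset E_1$; the remaining task is to show that $f(E)$ sits inside $\varphi_1(E)$ or inside $\varphi_2(E)$, rather than straddling the gap $G_1$ between them.

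First I would dispose of the easy regime. The two pieces are separated by exactly $\mathrm{dist}(\varphi_1(E),\varphi_2(E))=|G_1|$, while $\mathfrak{g}(f(E))=r\,|G_2|$, since $f$ scales the largest gap $G_2$ of $E$ by the factor $r$. Hence if $r<|G_1|/|G_2|$, then $\mathfrak{g}(f(E))<|G_1|$, and Lemma \ref{gap-lemma} applied to the decomposition $E_1=\varphi_1(E)\cup\varphi_2(E)$ immediately gives $f(E)\subset\varphi_1(E)$ or $f(E)\subset\varphi_2(E)$. The whole difficulty is therefore concentrated in the range $r\ge|G_1|/|G_2|$, where $f(E)$ could a priori meet both $\varphi_1(E)$ and $\varphi_2(E)$; note that $|G_2|>|G_1|$ is exactly what prevents the direct gap argument from closing here.

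So the main obstacle is to exclude this straddling configuration. Because $f$ is an increasing affine map, $f(E)$ is an order-preserving rescaled copy of $E$, so straddling forces the separating hole to be the image of a single gap of $E$ of length at least $|G_1|$; tracking convex hulls shows this can only be a rescaled copy of the top gap $G_2$, so that $f$ sends the block $\varphi_1(E)\cup\varphi_2(E)$ into $\varphi_1(E)$ and sends $\varphi_3(E)$ into $\varphi_2(E)$. I would then try to rule this out by renormalization: the relation $f(\varphi_1(E))\subset\varphi_1(E)$ makes $\varphi_1^{-1}\circ f\circ\varphi_1(x)=rx+t/\rho$ a contraction of the same ratio $r$ with image in $E$, whose translation part has been scaled up by $1/\rho$. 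Pushing this renormalization as far as the structure permits should drive the translation part — equivalently the fixed point of $f$ — down to the endpoint $0$, reducing matters to the homothety $f(x)=rx$; there a direct analysis of $rE\subset E$, forcing $r$ to avoid the gaps of $E$ and invoking the non-symmetry of $E$ and of $\varphi_1(E)\cup\varphi_2(E)$ from Lemma \ref{lemma:symmetric} together with Lemma \ref{lemma:subset-to-equal}, should yield $rE\subset\varphi_1(E)$, contradicting straddling. I expect this exclusion of the straddling configuration — and in particular controlling how the renormalization terminates — to be the genuine difficulty, whereas the gap-lemma step is routine.
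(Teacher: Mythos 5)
Your first two steps (peeling off $\varphi_3(E)$ with Lemma \ref{lemma:dichotomy}(i), and disposing of the range $r|G_2|<|G_1|$ with Lemma \ref{gap-lemma}) are fine and consistent with the paper's strategy. The proposal breaks down at exactly the point you flag as the crux. First, the assertion that the hole of $f(E)$ separating the two pieces ``can only be a rescaled copy of the top gap $G_2$'' is unjustified and in general false: the gaps of $f(E)$ are the intervals $f\circ\varphi_{\jj}(G_1)$ and $f\circ\varphi_{\jj}(G_2)$ for $\jj\in\Omega^*$, and while none of the former can contain $G_1$ (they have length $r\rho^{|\jj|}|G_1|<|G_1|$), every $f\circ\varphi_{\jj}(G_2)$ with $r\rho^{|\jj|}|G_2|\ge|G_1|$ is a candidate, and when $\lambda$ is close to $\rho$ the word $\jj$ can be arbitrarily long. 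So you may not conclude $f\big(\varphi_1(E)\cup\varphi_2(E)\big)\subset\varphi_1(E)$; what one actually gets is $f\circ\varphi_{\jj}\big(\varphi_1(E)\cup\varphi_2(E)\big)\subset\varphi_1(E)$ for some unknown $\jj$, and the conjugation by $\varphi_1$ that your renormalization rests on is then simply not available.

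Second, even in the special case $\jj=\emptyset$ the renormalization is a plan, not a proof: iterating $\varphi_1^{-1}\circ f\circ\varphi_1$ would require re-establishing the same straddling configuration at every scale, which nothing guarantees; if the iteration halts you only learn where $f$ sends the small copy $\varphi_{1^n}(E)$, which does not contradict the straddling of $f(E)$ itself; and scaling the translation part by $1/\rho$ at each step moves it away from $0$, not toward it. The paper avoids renormalization entirely: it takes the \emph{longest} word $\ii$ with first letter $1$ such that $f\circ\varphi_{\jj}\big(\varphi_1(E)\cup\varphi_2(E)\big)\subset\varphi_{\ii}(E)$, applies Lemma \ref{lemma:dichotomy}(ii) to $\varphi_{\ii}^{-1}\circ f\circ\varphi_{\jj}$ to upgrade this to containment in $\varphi_{\ii}\big(\varphi_1(E)\cup\varphi_2(E)\big)$, and then compares convex hulls (using $\mathrm{Conv}\big(\varphi_1(E)\cup\varphi_2(E)\big)=[0,\rho+\lambda]$) to deduce $f\circ\varphi_{\jj}([0,1])\subset\varphi_1([0,1])=[0,\rho]$, which is incompatible with $G_1=(\rho,\lambda)\subset f\circ\varphi_{\jj}(G_2)$. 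That maximal-word plus convex-hull step is the idea missing from your argument.
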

\begin{proof}
  Suppose on the contrary that $f(E) \not\subset \varphi_i(E)$ for all $i \in \{1,2,3\}$.
  By Lemma \ref{lemma:dichotomy} (i), we have
  \begin{equation}\label{eq:subset-f}
    f(E) \subset \varphi_1(E) \cup \varphi_2(E),
  \end{equation}
  and \[ f(E) \cap \varphi_1(E) \ne \emptyset, \; f(E) \cap \varphi_2(E) \ne \emptyset. \]
  Then the open interval $G_1$ is contained in gaps of $f(E)$.
  That is, $$ G_1 \subset \bigcup_{\ii \in \Omega^*} \big( f\circ \varphi_{\ii} (G_1) \cup f\circ \varphi_{\ii} (G_2) \big).$$
  Note that $|f\circ \varphi_{\ii} (G_1)| < |G_1|$ for any $\ii \in \Omega^*$.
  Thus, there exists $\jj \in \Omega^*$ such that
  \begin{equation}\label{eq:condition-3}
   G_1 \subset f \circ \varphi_{\jj} (G_2).
  \end{equation}
  This implies that $f\circ \varphi_{\jj}(\rho+\lambda) \le \rho$.
  It follows that
  $$f\circ \varphi_{\jj}\big( \varphi_1(E) \cup \varphi_2(E) \big) \subset f(E) \cap (-\f,\rho] \subset \varphi_1(E),$$
  where the last inclusion follows from (\ref{eq:subset-f}).
  Choose the longest word $\ii =i_1 i_2 \ldots i_n\in \Omega^*$ with $i_1 = 1$ such that
  $$f\circ \varphi_{\jj}\big( \varphi_1(E) \cup \varphi_2(E) \big) \subset \varphi_{\ii}(E).$$
  Then we have $f\circ \varphi_{\jj}\big( \varphi_1(E) \cup \varphi_2(E) \big) \not\subset \varphi_{\ii 3}(E)$.
  Applying Lemma \ref{lemma:dichotomy} (ii) for the map $\varphi_{\ii}^{-1} \circ f \circ \varphi_{\jj}$, we obtain
  \[ f\circ \varphi_{\jj}\big( \varphi_1(E) \cup \varphi_2(E) \big) \subset \varphi_{\ii}\big( \varphi_1(E) \cup \varphi_2(E) \big). \]
  Note that $\mathrm{Conv}\big( \varphi_1(E) \cup \varphi_2(E) \big) = [0,\rho+\lambda]$.
  Thus we obtain $f\circ \varphi_{\jj}( [0,\rho+\lambda] ) \subset \varphi_{\ii}( [0,\rho+\lambda] )$, which implies that $f \circ \varphi_{\jj}([0,1]) \subset \varphi_{\ii}([0,1]) \subset \varphi_1([0,1])$.
  Together with (\ref{eq:condition-3}), we have
  \[ G_1 \subset f \circ \varphi_{\jj}(G_2) \subset f \circ \varphi_{\jj}([0,1]) \subset \varphi_1([0,1]), \]
  which leads to a contradiction.
  The proof is completed.
\end{proof}

\begin{proposition}\label{prop:no-negative-1}
  For any $0 < r < 1$ and for any $t \in \R$, we have $$ -r \big( \varphi_1(E) \cup \varphi_2(E) \big) + t \not\subset E.$$
\end{proposition}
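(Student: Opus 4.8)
The plan is to argue by contradiction: suppose $g(A) \subset E$, where $A := \varphi_1(E) \cup \varphi_2(E)$ and $g(x) = -rx + t$ with $0 < r < 1$. First I would localize $g(A)$ inside a single cylinder. Since $\mathrm{Conv}(g(A))$ has the fixed positive length $r(\rho+\lambda)$ while $\varphi_{\ii}([0,1])$ has length $\rho^{|\ii|}$, there is a longest word $\ii$ with $g(A) \subset \varphi_{\ii}(E)$; set $h := \varphi_{\ii}^{-1} \circ g$, an orientation-reversing similitude of ratio $-r'$ with $r' = r\rho^{-|\ii|}$. Because $\mathrm{Conv}(h(A)) \subset [0,1]$ one has $r'\rho < 1$, so Lemma \ref{lemma:dichotomy}(ii) applies to $h$ and gives $h(A) \subset A$ or $h(A) \subset \varphi_3(E)$; the latter would contradict the maximality of $\ii$, as would $h(A) \subset \varphi_1(E)$ or $h(A) \subset \varphi_2(E)$. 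Hence $h(A) \subset A$ and $h(A)$ straddles $G_1$, meeting both $\varphi_1(E)$ and $\varphi_2(E)$.

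Now $h(A) \subset A$ forces $\mathrm{Conv}(h(A)) \subset \mathrm{Conv}(A) = [0,\rho+\lambda]$, whence $r' \le 1$. If $r' = 1$, then Lemma \ref{lemma:subset-to-equal} yields $h(A) = A$, so $A$ is invariant under the orientation-reversing isometry $h$ and is therefore symmetric, contradicting Lemma \ref{lemma:symmetric}. Moreover, straddling $G_1$ requires a gap of $h(A)$ of length at least $|G_1|$, so $\mathfrak{g}(h(A)) = r'\,\mathfrak{g}(A) \ge |G_1|$; since $\mathfrak{g}(A) = \max(|G_1|, \rho|G_2|)$, in the case $\mathfrak{g}(A) = |G_1|$ this already gives $r' \ge 1$ and we are done as above. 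The only remaining case is $r' < 1$ together with $\mathfrak{g}(A) = \rho|G_2| > |G_1|$.

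For this case I would use a trapping argument. The two longest gaps of $A$ are $\varphi_1(G_2)$ and $\varphi_2(G_2)$, each of length $\rho|G_2|$; since every gap of $h(A)$ has length at most $\mathfrak{g}(h(A)) = r'\rho|G_2| < \rho|G_2|$ and the endpoints of $\mathrm{Conv}(h(A))$ lie in $A$, the interval $\mathrm{Conv}(h(A))$ can contain neither of these two gaps, and hence lies in one of the three closed pieces into which they cut $[0,\rho+\lambda]$. As $h(A)$ straddles $G_1$, it must be the middle piece, giving $h(A) \subset \varphi_{13}(E) \cup \varphi_2(A)$. Because the gap of $h(A)$ that contains $G_1$ is the image $h(\varphi_{\mathbf{k}}(G_2))$ of a large gap of $A$ (a short gap $\varphi_{\mathbf{k}}(G_1)$ is too small to contain $G_1$), with $\mathbf{k}$ a nonempty word, that gap lies strictly inside a single first-level block $h(\varphi_{k_1}(E))$ of $h(A)$. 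This block is a reflected copy of $E$ at scale $r'\rho$, is contained in $A$, and again straddles $G_1$. Thus the starting configuration is reproduced at the strictly smaller scale $r'\rho$, and iterating produces reflected copies of $E$ inside $A$ straddling $G_1$ at scales $r'\rho^{n}$.

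The main obstacle is to make this descent terminate. Straddling $G_1$ requires scale at least $|G_1|/|G_2|$, so the descent must stop once $r'\rho^{n} < |G_1|/|G_2|$, which would be the sought contradiction — except that a descent step can fail to strictly decrease the scale in the borderline situation where the straddle occurs at the copy's own largest gap $G_2$ (the case $\mathbf{k} = \emptyset$ for that copy): there one only extracts a relation of the form $\psi(A) \subset \varphi_2(A)$, which after rescaling by $\varphi_2^{-1}$ returns an orientation-reversing self-embedding of $A$ at scale multiplied by $\rho^{-1}$. To close the argument I would exploit that every scale arising in the process has the form $r\rho^{k}$ with $k \in \Z$ (logarithmic commensurability, in the spirit of Feng--Wang \cite{Feng-Wang-2009}), so only finitely many admissible scales lie in the window $[\,|G_1|/|G_2|,\,1)$; the process therefore cannot descend indefinitely and must revisit a scale, and composing one loop produces an orientation-reversing similitude of ratio $1$ carrying $A$ onto itself — a symmetry of $A$, which Lemma \ref{lemma:symmetric} forbids. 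Verifying that such a loop indeed composes to a genuine symmetry, rather than a degenerate relation, is the delicate point I expect to require the most care.
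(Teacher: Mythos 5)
Your localization of $g(A)$, where $A=\varphi_1(E)\cup\varphi_2(E)$, into a deepest cylinder, the exclusion of ratio $1$ via Lemmas \ref{lemma:subset-to-equal} and \ref{lemma:symmetric}, and the observation that the gap of $h(A)$ swallowing $G_1$ must be the image of a $G_2$-type gap of $A$ are all correct, and they reproduce the paper's reduction to two cases according to the first letter $k_1$. The genuine gap is exactly where you place it: the borderline branch in which the straddling copy meets $G_1$ at its own principal gap, so that one only extracts a relation $\psi(A)\subset\varphi_2(E)$ and the scale does not decrease. Your proposed repair does not work as stated. Commensurability does confine the scales to the finite set $\{r\rho^k:k\in\Z\}\cap\big[|G_1|/|G_2|,1\big)$, but recurrence of a scale only gives two maps $\psi_m,\psi_n$ of equal ratio and nothing more: $\psi_n\circ\psi_m^{-1}$ has ratio $+1$ and carries $\psi_m(A)$ to $\psi_n(A)$, not $A$ to $A$, and the translation parts of the $\psi_j$ range over a compact interval without any reason to recur. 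No composition along such a ``loop'' is an orientation-reversing isometry preserving $A$, so Lemma \ref{lemma:symmetric} cannot be invoked. This is not a removable technicality; the non-terminating branch is where all the difficulty of the proposition lives.

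The paper closes that branch by a different mechanism, and the difference begins at the very first step: instead of maximizing the word $\ii$ with $g(A)\subset\varphi_{\ii}(E)$ for the given map, it maximizes the exponent $n$ over \emph{all} translates, i.e., over all $t$ with $-r\rho^{-n}A+t\subset E$. This yields the much stronger non-inclusion \eqref{eq:maximality}, namely $f\circ\varphi_{\uu}(A)\not\subset\varphi_{\vv}(E)$ whenever $|\vv|>|\uu|$, which your per-map maximality does not provide. That inequality powers an induction showing $f\circ\varphi_{13^n}(E)\subset\varphi_{21^n}(E)$ for every $n$ in the problematic branch; since $\varphi_{21^n}(1)\to\varphi_2(0)=\lambda$, letting $n\to\infty$ squeezes $f(\rho)\le\lambda$, contradicting $f(\rho)>\lambda$. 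If you wish to keep your descent scheme, you should replace your maximality by this translate-uniform one and replace the loop argument by such a squeeze toward the fixed points of $\varphi_3$ and $\varphi_1$; as written, the proof is incomplete precisely in the case it most needs to handle.
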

\begin{proof}
  Suppose on the contrary that there exist $0< r < 1$ and $t_0 \in \R$ such that \[ -r \big( \varphi_1(E) \cup \varphi_2(E) \big) + t_0 \subset E. \]
  Let $$\ell = \max\big\{ n \in \N \cup \{0\}: \;\text{there exists}\; t \in \R \;\text{such that}\; -r\rho^{-n}\big( \varphi_1(E) \cup \varphi_2(E) \big) + t \subset E \big\}.$$
  Then we can find $t_1 \in \R$ such that $ -r\rho^{-\ell}\big( \varphi_1(E) \cup \varphi_2(E) \big) + t_1 \subset E$.
  Write $r' = r \rho^{-\ell}$ and $f(x) = - r' x + t_1$.
  Then we have $$f\big( \varphi_1(E) \cup \varphi_2(E) \big) \subset E.$$
  Due to the maximality of $\ell$, for any two words $\uu,\vv\in \Omega^*$ with $|\vv| > |\uu|$, we have
  \begin{equation}\label{eq:maximality}
    f\circ \varphi_{\uu} \big( \varphi_1(E) \cup \varphi_2(E) \big) \not\subset \varphi_{\vv} (E).
  \end{equation}
  It follows that
  \begin{equation}\label{eq:not-subset}
    f\big( \varphi_1(E) \cup \varphi_2(E) \big) \not\subset \varphi_{i}(E)\quad \forall i \in \{1,2,3\}.
  \end{equation}
  By Lemma \ref{lemma:dichotomy} (ii), we obtain
  \begin{equation}\label{eq:subset-f-1-2}
    f\big( \varphi_1(E) \cup \varphi_2(E) \big) \subset \varphi_1(E) \cup \varphi_2(E).
  \end{equation}
  It follows that $r' \le 1$.
  If $r' = 1$, then by Lemma \ref{lemma:subset-to-equal} we have $- \big( \varphi_1(E) \cup \varphi_2(E) \big) + t_1 =  \varphi_1(E) \cup \varphi_2(E) $.
  This means that the set $\varphi_1(E) \cup \varphi_2(E)$ is symmetric, which contradicts Lemma \ref{lemma:symmetric}.
  Thus we have $r' < 1$.

  Note by (\ref{eq:not-subset}) and (\ref{eq:subset-f-1-2}) that
  \begin{equation}\label{eq:condition-7}
    f\big( \varphi_1(E) \cup \varphi_2(E) \big) \cap \varphi_{1}(E) \ne \emptyset \text{ and } f\big( \varphi_1(E) \cup \varphi_2(E) \big) \cap \varphi_{2}(E) \ne \emptyset.
  \end{equation}
  The open interval $G_1$ must be contained in gaps of $f\big( \varphi_1(E) \cup \varphi_2(E) \big)$.
  That is, $$ G_1 \subset f \circ \varphi_1([0,1] \setminus E) \cup f(G_1) \cup f \circ \varphi_2([0,1]\setminus E).$$
  Note that $|f(G_1)| = r' |G_1| < |G_1|$.
  So, we have two cases:
  \begin{itemize}
    \item[\rm{(i)}] $G_1 \subset f\circ \varphi_1([0,1] \setminus E)$;
    \item[\rm{(ii)}] $G_1 \subset f\circ \varphi_2([0,1] \setminus E)$.
  \end{itemize}

  \textbf{Case (i)}: $G_1 \subset f\circ \varphi_1([0,1] \setminus E)$.
  It follows that $G_1 \subset f\circ \varphi_1 \big( (0,1) \big) = \big( f(\rho), f(0) \big)$, which implies that $f(\lambda) < f(\rho) \le \rho$.
  Note by (\ref{eq:subset-f-1-2}) that $f\circ \varphi_2(E) \subset \varphi_{1}(E) \cup \varphi_{2}(E)$.
  We have
  \begin{align*}
    f\circ \varphi_2(E) & \subset \big(\varphi_{1}(E) \cup \varphi_{2}(E) \big) \cap (-\f, f(\lambda)] \\
    & \subset \big(\varphi_{1}(E) \cup \varphi_{2}(E) \big) \cap (-\f, \rho] \\
    & = \varphi_{1}(E).
  \end{align*}
  By (\ref{eq:maximality}), we have $ f\circ \varphi_2(E) \not\subset \varphi_{1 3}(E)$.
  Applying Lemma \ref{lemma:dichotomy} (i) for the map $\varphi_{1}^{-1} \circ f \circ \varphi_2$, we conclude that $$f\circ \varphi_2(E) \subset \varphi_{1} \big( \varphi_1(E) \cup \varphi_2(E) \big).$$
  Together with (\ref{eq:condition-7}), we obtain that the open interval $\varphi_{1}(G_2)$ is contained in gaps of $f\big( \varphi_1(E) \cup \varphi_2(E) \big)$.
  Note that $$\mathfrak{g}\big( f\circ \varphi_1(E) \big) = \mathfrak{g}\big( f\circ \varphi_2(E) \big)  = r' \rho |G_2| < \rho |G_2| = |\varphi_{1}(G_2)|. $$
  Thus we have $$\varphi_{1}(G_2) \subset f(G_1).$$
  It follows that $\rho |G_2| \le r' |G_1| < |G_1|$.
  Since $G_1 \subset f\circ \varphi_1([0,1] \setminus E)$, we have
  \[ |G_1| \le \mathfrak{g}\big( f\circ \varphi_1(E) \big) = r'\rho|G_2| < \rho|G_2|, \]
  which leads to a contradiction.

  \textbf{Case (ii):} $G_1 \subset f\circ \varphi_2([0,1] \setminus E)$.
  It follows that $G_1 \subset f\circ \varphi_2 \big( (0,1) \big) = \big( f(\rho+\lambda), f(\lambda) \big)$, which implies that
  \begin{equation}\label{eq:condition-14}
    f(\rho) > f(\lambda) \ge \lambda.
  \end{equation}
  Note by (\ref{eq:subset-f-1-2}) that $f\circ \varphi_1(E) \subset \varphi_{1}(E) \cup \varphi_{2}(E)$.
  We have
  \begin{align*}
    f\circ \varphi_1(E) & \subset \big( \varphi_{1}(E) \cup \varphi_{2}(E) \big) \cap [f(\rho),+\f) \\
    & \subset \big( \varphi_{1}(E) \cup \varphi_{2}(E) \big) \cap [\lambda,+\f) \\
    & = \varphi_{2}(E).
  \end{align*}

  We claim
  \begin{itemize}
    \item $f\circ \varphi_{1 3^n}(E) \subset \varphi_{2 1^n}(E)$ for all $n \in \N \cup \{0\}$.
  \end{itemize}
    Obviously, the claim holds for $n=0$.
    Suppose that $f\circ \varphi_{1 3^n}(E) \subset \varphi_{2 1^n}(E)$ for some $n \in \N \cup \{0\}$.
    By (\ref{eq:maximality}) we have $f\circ \varphi_{1 3^n}(E) \not \subset \varphi_{2 1^n 3}(E)$.
    Applying Lemma \ref{lemma:dichotomy} (i) for the map $\varphi_{2 1^{n}}^{-1} \circ f \circ \varphi_{1 3^n}$, we obtain $$f\circ \varphi_{1 3^n}(E) \subset \varphi_{2 1^{n+1}}(E) \cup \varphi_{2 1^n 2} (E).$$
    That is, $$f\circ \varphi_{13^n}\big( \varphi_1(E) \cup \varphi_2(E) \big) \cup f\circ \varphi_{13^{n+1}}(E) \subset \big( \varphi_{2 1^{n+1}}(E) \cup \varphi_{2 1^n 2}(E) \big).$$
    Again by (\ref{eq:maximality}), we have $f\circ \varphi_{13^n}\big( \varphi_1(E) \cup \varphi_2(E) \big) \not\subset \varphi_{2 1^n 2}(E)$.
    It follows that $f\circ \varphi_{13^n}\big( \varphi_1(E) \cup \varphi_2(E) \big) \cap \varphi_{2 1^{n+1}}(E) \ne \emptyset$.
    Note that the set $f\circ \varphi_{13^{n+1}}(E)$ lies in the left of $f\circ \varphi_{13^n}\big( \varphi_1(E) \cup \varphi_2(E) \big)$.
    Thus, we conclude that $f\circ \varphi_{13^{n+1}}(E) \subset \varphi_{2 1^{n+1}} (E)$.
    By induction on $n$, we prove the claim.

  By the claim, we have $$f (\rho) = f\circ \varphi_{1 3^{n}}(1) \le \varphi_{2 1^n} (1) = \varphi_{2}(\rho^n).$$
  Letting $n \to \f$, we have $f (\rho) \le \varphi_{2}(0)=\lambda$, which contradicts (\ref{eq:condition-14}).

  Therefore, we conclude that $-r \big( \varphi_1(E) \cup \varphi_2(E) \big) + t \not\subset E$ for any $0< r < 1$ and for any $t \in \R$.
\end{proof}

\begin{proof}[Proof of Theorem \ref{thm:three-map} (i) for $\rho < \lambda < (1-\rho)/2$]
  Write $f(x) = r x + t$ with $0< |r| < 1$ and $t \in \R$. Note that $f(E) \subset E$.
  By Proposition \ref{prop:no-negative-1}, we obtain $0< r < 1$.
  Then by Proposition \ref{prop:positive-1}, there exists $i_1 \in \{1,2,3\}$ such that $f(E) \subset f_{i_1}(E)$.
  Choose the longest word $\ii \in \Omega^*$ such that \[ f(E) \subset \varphi_{\ii}(E). \]
  It follows that $r \le \rho^{|\ii|}$.
  If $r < \rho^{|\ii|}$, then applying Proposition \ref{prop:positive-1} for the map $\varphi_{\ii}^{-1}\circ f$ we obtain $f(E) \subset \varphi_{\ii i_0}(E)$ for some $i_0 \in \{1,2,3\}$.
  This contradicts the longest length of $\ii$.
  Thus, we have $r=\rho^{|\ii|}$.
  By Lemma \ref{lemma:subset-to-equal}, we obtain $f(E) = \varphi_{\ii}(E)$. It follows that $f(x) = \varphi_{\ii}(x)$.
\end{proof}

\section{Self-embeddings for $\lambda =\rho$}\label{sec:OSC}

In this section we will prove Theorem \ref{thm:three-map} (i) for $\lambda=\rho$.
We always assume that $\lambda = \rho$ and write $E=E_{\rho,\lambda}$ and $\Omega^* = \Omega_3^*$.
Then we have $\varphi_2(x) = \rho x + \rho$, $G_2 = (2\rho, 1-\rho)$, and \[ [0,1] \setminus E = \bigcup_{\ii \in \Omega^*} \varphi_{\ii}(G_2). \]

\begin{figure}[h!]
\begin{center}
\begin{tikzpicture}[
    scale=10,
    axis/.style={very thick, ->},
    important line/.style={very thick},
    every node/.style={color=black}
    ]

    \draw[{|-|}, important line] (0, 0)--(1, 0);
    \node[above,scale=1pt]at(0,0.02){$0$};
    \node[above,scale=1pt]at(1,0.02){$1$};

    \draw[{|-|}, important line] (0, -0.05)--(0.3, -0.05);
    \node[above,scale=1pt]at(0.15,-0.13){$\varphi_1([0,1])$};
    \draw[{-|}, important line] (0.3, -0.05)--(0.6, -0.05);
    \node[above,scale=1pt]at(0.45,-0.13){$\varphi_2([0,1])$};
    \draw[{|-|}, important line] (0.7, -0.05)--(1, -0.05);
    \node[above,scale=1pt]at(0.85,-0.13){$\varphi_3([0,1])$};

    \node[above,scale=1pt]at(0.65,-0.085){$G_2$};
\end{tikzpicture}
\end{center}
\caption{The first iteration for $E_{\rho,\lambda}$ when $\lambda=\rho$}\label{fig:1}
\end{figure}

\begin{lemma}\label{lemma:positive-2-1}
  Let $f(x) = r x +t$ with $0< r < 1$ and $t \in \R$. If there exists $\ii \in \Omega^*$ such that $$\varphi_{\ii}(G_2) \subset f(G_2),$$ and if $\varphi_{\ii}(0) \le f(0)$ or $f(1) \le \varphi_{\ii}(1)$, then we have $f(x)= \varphi_{\ii}(x)$.
\end{lemma}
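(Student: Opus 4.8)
The plan is to reduce everything to two linear inequalities coming from the single containment $\varphi_{\ii}(G_2)\subset f(G_2)$, and then let the one-sided hypothesis pin down the parameters. I would write $n=|\ii|$ and $c=\varphi_{\ii}(0)$, so that $\varphi_{\ii}(x)=\rho^n x+c$, and recall that here $G_2=(2\rho,1-\rho)$ with $|G_2|=1-3\rho>0$. Since both $\varphi_{\ii}$ and $f$ are increasing ($r>0$), I would compute the two gaps explicitly as $\varphi_{\ii}(G_2)=(2\rho^{n+1}+c,\ (1-\rho)\rho^n+c)$ and $f(G_2)=(2\rho r+t,\ (1-\rho)r+t)$.

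The inclusion $\varphi_{\ii}(G_2)\subset f(G_2)$ is equivalent to the pair of endpoint inequalities $2\rho r+t\le 2\rho^{n+1}+c$ and $(1-\rho)\rho^n+c\le(1-\rho)r+t$. Setting $d:=\rho^n-r$ and rearranging, these become exactly $(1-\rho)d\le t-c\le 2\rho d$. The first key observation is that the mere compatibility of these two bounds forces the sign of $d$: since $0<\rho<1/3$ we have $1-3\rho>0$, and $(1-\rho)d\le 2\rho d$ reads $(1-3\rho)d\le 0$, hence $d\le 0$, i.e.\ $r\ge\rho^n$. (The same conclusion also follows by comparing lengths, $|\varphi_{\ii}(G_2)|=\rho^n|G_2|\le r|G_2|=|f(G_2)|$.)

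Now I would feed in the side condition. If $\varphi_{\ii}(0)\le f(0)$, that is $c\le t$, then $t-c\ge 0$; combined with $t-c\le 2\rho d\le 0$ this squeezes $t-c=0$ and $2\rho d=0$, so $d=0$. If instead $f(1)\le\varphi_{\ii}(1)$, that is $r+t\le\rho^n+c$, then $t-c\le d$; combined with $t-c\ge(1-\rho)d$ this gives $(1-\rho)d\le d$, i.e.\ $\rho d\ge 0$, so $d\ge 0$, hence again $d=0$ and then $t-c=0$. In either case $r=\rho^n$ and $t=c$, which is precisely $f=\varphi_{\ii}$.

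I do not expect any genuine obstacle here: the statement is essentially a two-gap bookkeeping computation, and notably it never uses $f(E)\subset E$ — only the relative position of the two gaps together with the hypothesis $\rho<1/3$. The one point requiring care is tracking the sign of $d=\rho^n-r$ correctly, since the whole argument turns on the inequalities being oriented the right way; establishing $d\le 0$ first, and only then invoking the one-sided hypothesis, is what makes the final squeeze close in both cases.
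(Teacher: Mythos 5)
Your proof is correct and is essentially the paper's argument in different notation: both reduce the gap containment $\varphi_{\ii}(G_2)\subset f(G_2)$ to the two endpoint inequalities (equivalently $(1-\rho)d\le t-c\le 2\rho d$ with $d=\rho^{|\ii|}-r$), deduce $r\ge\rho^{|\ii|}$, and then use the one-sided hypothesis to force $r=\rho^{|\ii|}$ and $t=\varphi_{\ii}(0)$. The only cosmetic difference is that the paper phrases the squeeze in terms of $f(0)\le\varphi_{\ii}(0)$ and $f(1)\ge\varphi_{\ii}(1)$ rather than the sign of $d$.
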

\begin{proof}
  Since $\varphi_{\ii}(G_2) \subset f(G_2)$, we have $r \ge \rho^{|\ii|}$, $f(\lambda+\rho) \le \varphi_{\ii}(\lambda+\rho)$, and $\varphi_{\ii}(1-\rho) \le f(1-\rho)$.
  It follows that \[ f(0) = f(\lambda+\rho) - r(\lambda+\rho) \le \varphi_{\ii}(\lambda+\rho)- r(\lambda+\rho) = \varphi_{\ii}(0) + (\lambda+\rho)(\rho^{|\ii|}-r) \le \varphi_{\ii}(0), \]
  and \[ f(1) = f(1-\rho) + r \rho \ge \varphi_{\ii}(1-\rho) + r \rho = \varphi_{\ii}(1) + \rho(r-\rho^{|\ii|}) \ge \varphi_{\ii}(1). \]
  If $\varphi_{\ii}(0) \le f(0)$ or $f(1) \le \varphi_{\ii}(1)$, then we must have $r = \rho^{|\ii|}$.
  Then we conclude that $\varphi_{\ii}(G_2) = f(G_2)$, which implies that $f(x)=\varphi_{\ii}(x)$.
\end{proof}

\begin{lemma}\label{lemma:positive-2-2}
  Let $f(x) = r x +t$ with $0< r < 1$ and $t \in \R$. If
  \begin{equation}\label{eq:subset-1-2}
    f(E) \subset \varphi_1(E) \cup \varphi_2(E),
  \end{equation}
  and $f(0) < \rho < f(1)$, then we have \[ f(E) \subset \varphi_{13}(E) \cup \varphi_2\big( \varphi_1(E) \cup \varphi_2(E) \big). \]
\end{lemma}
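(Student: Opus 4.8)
The plan is to reduce the statement to two one-sided inclusions and prove each by a gap-length contradiction. First I would rewrite the target set: since $\varphi_2\big(\varphi_1(E)\cup\varphi_2(E)\big)=\varphi_{21}(E)\cup\varphi_{22}(E)$, and since $f(E)\subset\varphi_1(E)\cup\varphi_2(E)$ gives $f(E)=\big(f(E)\cap\varphi_1(E)\big)\cup\big(f(E)\cap\varphi_2(E)\big)$, it suffices to show
\[ f(E)\cap\varphi_1(E)\subset\varphi_{13}(E)\qquad\text{and}\qquad f(E)\cap\varphi_2(E)\subset\varphi_{21}(E)\cup\varphi_{22}(E). \]
Before that I would record the basic data: $f$ is increasing with $f(E)\subset[0,2\rho]\cap E$, so $0\le f(0)<\rho<f(1)\le 2\rho$; the gaps of $f(E)$ are exactly the intervals $f(\varphi_{\ww}(G_2))$, $\ww\in\Omega^*$, of length $r\rho^{|\ww|}\mathfrak{g}(E)$ with $\mathfrak{g}(E)=1-3\rho$, the longest being $f(G_2)$; and the two gaps of $E$ adjacent to the common point $\rho=\varphi_1(1)=\varphi_2(0)$ are $\varphi_1(G_2)=(2\rho^2,\rho-\rho^2)$ and $\varphi_2(G_2)=(\rho+2\rho^2,2\rho-\rho^2)$, each of length exactly $\rho\,\mathfrak{g}(E)=\rho(1-3\rho)$.

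For the first inclusion I would argue by contradiction. The gap $\varphi_1(G_2)$ splits $\varphi_1(E)$ into $\varphi_{11}(E)\cup\varphi_{12}(E)\subset[0,2\rho^2]$ and $\varphi_{13}(E)$, and $f(E)$ misses this gap; so if the inclusion fails, $f(E)$ must contain a point of $[0,2\rho^2]$. Since also $f(1)>\rho>\rho-\rho^2$, the set $f(E)$ has points on both sides of $\varphi_1(G_2)$, so $\varphi_1(G_2)$ is contained in some gap $f(\varphi_{\ww}(G_2))$ of $f(E)$. The length inequality $r\rho^{|\ww|}\mathfrak{g}(E)\ge\rho\,\mathfrak{g}(E)$ forces $|\ww|=0$ (any $|\ww|\ge1$ would give $r\ge\rho^{1-|\ww|}\ge 1$), so the enclosing gap is $f(G_2)$, whence $f(2\rho)\le 2\rho^2$ and $r\ge\rho$. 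But then $f\big(\varphi_1(E)\cup\varphi_2(E)\big)\subset[f(0),f(2\rho)]\subset[0,2\rho^2]$ forces $2r\rho\le 2\rho^2$, i.e.\ $r\le\rho$; hence $r=\rho$, $f(0)=0$, and $f=\varphi_1$, contradicting $f(1)>\rho$.

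The second inclusion I would prove by the mirror-image argument. A failure puts a point of $f(E)$ into $\varphi_{23}(E)\subset[2\rho-\rho^2,2\rho]$, which together with $f(0)<\rho<\rho+2\rho^2$ makes $f(E)$ straddle $\varphi_2(G_2)$; the identical length bound pins the enclosing gap to $f(G_2)$, so $f(1-\rho)\ge 2\rho-\rho^2$ and $r\ge\rho$. Then $f\big(\varphi_3(E)\big)\subset[f(1-\rho),f(1)]\subset[2\rho-\rho^2,2\rho]$ gives $r\rho\le\rho^2$, so $r=\rho$ and $f=\varphi_2$, contradicting $f(0)<\rho$. Combining the two inclusions yields the claim.

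The hard part will be the middle step of each half, namely excluding that $\varphi_i(G_2)$ is swallowed by a strictly \emph{smaller} gap of $f(E)$. This is exactly where the length identity $|\varphi_i(G_2)|=\rho\,\mathfrak{g}(E)$ and the contraction $r<1$ are used together to force the enclosing gap to be the largest gap $f(G_2)$, and thus $r\ge\rho$; the complementary convex-hull squeeze then gives $r\le\rho$, and the strictness of $f(0)<\rho<f(1)$ converts the degenerate case $r=\rho$, where $f=\varphi_1$ or $f=\varphi_2$, into a contradiction.
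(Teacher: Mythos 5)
Your proof is correct and follows essentially the same route as the paper: the same reduction to excluding $f(E)$ from $\varphi_{11}(E)\cup\varphi_{12}(E)$ and from $\varphi_{23}(E)$, and the same argument that $\varphi_i(G_2)$ must be swallowed by a gap of $f(E)$ whose length then forces it to be $f(G_2)$. The only cosmetic difference is that your final convex-hull squeeze (giving $r=\rho$ and then $f=\varphi_1$ or $f=\varphi_2$) reproves inline what the paper delegates to Lemma \ref{lemma:positive-2-1}.
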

\begin{proof}
  By (\ref{eq:subset-1-2}) it suffices to show \[ f(E) \cap \varphi_1\big( \varphi_1(E) \cup \varphi_2(E) \big)=\emptyset \; \text{and}\; f(E) \cap \varphi_{23}(E) =\emptyset. \]

  If $f(E) \cap \varphi_1\big( \varphi_1(E) \cup \varphi_2(E) \big)\ne\emptyset$, then $f(0) \le \varphi_1(2\rho)$. Note that $f(1)>\rho =\varphi_1(1)$.
  Then the open interval $\varphi_1(G_2)$ is contained in gaps of $f(E)$. That is,
  \[ \varphi_1(G_2) \subset \bigcup_{\ii \in \Omega^*} f \circ \varphi_{\ii}(G_2). \]
  For any $\ii \in \Omega^*$ with $|\ii|\ge 1$, we have $|f \circ \varphi_{\ii}(G_2)|= r \rho^{|\ii|} |G_2| < \rho |G_2| = |\varphi_1(G_2)|$.
  Thus, we have $\varphi_1(G_2) \subset f(G_2)$.
  Note by (\ref{eq:subset-1-2}) that $\varphi_1(0) \le f(0)$.
  By Lemma \ref{lemma:positive-2-1}, we obtain $f(x) = \varphi_1(x)$.
  So, $f(1) = \varphi_1(1) = \rho$.
  This leads to a contradiction.

  If $f(E) \cap \varphi_{23}(E) \ne \emptyset$, then $f(1) \ge \varphi_{23}(0) = \varphi_2(1-\rho)$.
  Note that $f(0) < \rho = \varphi_2(0)$.
  Then the open interval $\varphi_2(G_2)$ is contained in gaps of $f(E)$.
  Similarly, we have $\varphi_2(G_2) \subset f(G_2)$.
  Note by (\ref{eq:subset-1-2}) that $\varphi_2(1) \ge f(1)$.
  By Lemma \ref{lemma:positive-2-1}, we obtain $f(x) = \varphi_2(x)$.
  It follows that $f(0) = \varphi_2(0) = \rho$.
  This also leads to a contradiction.
\end{proof}

\begin{proposition}\label{prop:positive-2}
  Let $f(x) = r x +t$ with $0< r < 1$ and $t \in \R$. If $f(E) \subset E$, then there exists $i \in \{1,2,3\}$ such that $f(E) \subset \varphi_{i}(E)$.
\end{proposition}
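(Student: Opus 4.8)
The plan is to separate the three first-level pieces by the one large gap and then deal with the touching pair $\varphi_1(E),\varphi_2(E)$ by hand. Write $A=\varphi_1(E)\cup\varphi_2(E)$, so that $E=A\cup\varphi_3(E)$ with $\mathrm{dist}(A,\varphi_3(E))=(1-\rho)-2\rho=|G_2|=\mathfrak g(E)$. Since $0<r<1$ we have $\mathfrak g(f(E))=r\,\mathfrak g(E)<\mathfrak g(E)$, so Lemma \ref{gap-lemma} applied to $E=A\cup\varphi_3(E)$ gives $f(E)\subset\varphi_3(E)$ (and then $i=3$) or $f(E)\subset A$. In the latter case, monotonicity of $f$ (here $r>0$) leaves exactly three possibilities: if $f(1)\le\rho$ then $f(E)\subset A\cap[0,\rho]=\varphi_1(E)$; if $f(0)\ge\rho$ then $f(E)\subset A\cap[\rho,2\rho]=\varphi_2(E)$; and otherwise $f(0)<\rho<f(1)$. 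I would show that this last ``straddling'' case cannot occur.

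To treat the straddling case I would exploit the self-similarity of $E$ at the touching point $\rho=\varphi_1(1)=\varphi_2(0)$. Let $h(x)=\rho x+(\rho-\rho^2)$ be the homothety of ratio $\rho$ fixing $\rho$; a direct check gives $h\circ\varphi_1=\varphi_{13}$ and $h\circ\varphi_2=\varphi_{21}$, whence $h^n(A)=\varphi_{1 3^n}(E)\cup\varphi_{2 1^n}(E)=E\cap[\rho-\rho^{n+1},\rho+\rho^{n+1}]$ is a shrinking neighbourhood of $\rho$. Given a straddling $f$ with $f(E)\subset A$, Lemma \ref{lemma:positive-2-2} confines it to $f(E)\subset\varphi_{13}(E)\cup\varphi_2(A)$. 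If in fact $f(E)\subset h(A)=\varphi_{13}(E)\cup\varphi_{21}(E)$, then $h^{-1}\circ f$ is again a contraction mapping $E$ into $A$ and straddling $\rho$, so Lemma \ref{lemma:positive-2-2} applies to it as well; iterating, $f(E)\subset h^n(A)$ for every $n$, which forces $f(E)\subset\bigcap_n h^n(A)=\{\rho\}$ and contradicts $r>0$. Hence at some stage one is forced into the situation $f(E)\not\subset h(A)$, that is $f(E)\cap\varphi_{22}(E)\ne\emptyset$.

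The crux, and the step I expect to be the main obstacle, is excluding this overshoot $f(E)\cap\varphi_{22}(E)\ne\emptyset$, precisely because $\varphi_{21}(E)$ and $\varphi_{22}(E)$ touch at $\rho+\rho^2$ with no gap between them, so no single gap comparison can isolate $f(E)$. Here my plan is to locate the large gap: since $f(E)$ runs from $\varphi_{13}(E)$ across to $\varphi_{22}(E)$, the gap $\varphi_{21}(G_2)$ of $E$ (of length $\rho^2|G_2|$) lies inside a gap of $f(E)$, and the bound $r\le 3\rho^2$ (the diameter of $\varphi_{13}(E)\cup\varphi_2(A)$) together with $\rho<1/3$ forces it into the single largest gap, so $\varphi_{21}(G_2)\subset f(G_2)$. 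Splitting $f(E)$ along $f(G_2)$ then gives $f(A)\subset\varphi_{13}(E)\cup\varphi_{21}(A)$ below and $f(\varphi_3(E))\subset\varphi_{213}(E)\cup\varphi_{22}(E)$ above. Applying Lemma \ref{lemma:positive-2-1} with the word $21$ (whose gap we have just placed in $f(G_2)$) immediately kills the borderline $f(1)=\rho+\rho^2$, since it would force $f=\varphi_{21}$ and hence $f(0)=\rho$, contrary to $f(0)<\rho$.

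For the genuinely strict overshoot $f(1)>\rho+\rho^2$ I would recurse on the top piece. Because $\rho+\rho^2=\varphi_2(\rho)$ and $\varphi_{213}(E)\cup\varphi_{22}(E)=\varphi_2(\varphi_{13}(E)\cup\varphi_2(E))$, the map $\varphi_2^{-1}\circ f\circ\varphi_3$ is a new straddling contraction into $A$ of the \emph{same} ratio $r$, while the alternative $f(\varphi_3(E))\subset\varphi_{22}(E)$ produces $\varphi_{22}^{-1}\circ f\circ\varphi_3$ of ratio $r/\rho$. Tracking the overshoot $f(1)-(\rho+\rho^2)$, which is carried by the affine map $o\mapsto o/\rho-\rho^2$ of multiplier $1/\rho>1$ under the equal-ratio shift, shows that the equal-ratio case cannot repeat indefinitely and must hand control back to a map of strictly larger ratio $r/\rho$. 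Since every contraction with image in $E$ has ratio $<1$ while these ratios grow geometrically, the descent terminates in a contradiction, so the straddling case is impossible and $f(E)\subset\varphi_i(E)$ for some $i$. The delicate part of the whole argument is exactly this last descent: making the monovariant (the ratio together with the overshoot) rigorous, and checking that the three transitions (into $h$, into $\varphi_2$, and into $\varphi_{22}$) exhaust the possibilities, is where the real work lies.
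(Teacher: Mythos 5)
Your reduction to the straddling case $f(0)<\rho<f(1)$, the iteration by the homothety $h(x)=\rho x+\rho-\rho^2$ fixing $\rho$ (which is exactly the paper's choice of the largest $\ell$ with $f(E)\subset\varphi_{13^{\ell}}(E)\cup\varphi_{21^{\ell-1}}(\varphi_1(E)\cup\varphi_2(E))$, repackaged), the placement $\varphi_{21}(G_2)\subset f(G_2)$ via $r\le 3\rho^2<\rho$, and the use of Lemma \ref{lemma:positive-2-1} to kill the borderline $f(1)=\rho+\rho^2$ are all correct and essentially what the paper does. The proof breaks down exactly where you flag it: the final descent. Your monovariant is the pair consisting of the ratio and the overshoot $o=g(1)-(\rho+\rho^2)$, and the equal-ratio transition $g\mapsto\varphi_2^{-1}\circ g\circ\varphi_3$ acts on the overshoot by $o\mapsto o/\rho-\rho^2$. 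This affine map has the fixed point $o^{*}=\rho^{3}/(1-\rho)$, i.e.\ $g(1)=\rho/(1-\rho)$, the fixed point of $\varphi_2$; for such a $g$ one checks that $\varphi_2^{-1}\circ g\circ\varphi_3=g$ identically (every map $g(x)=rx+\rho/(1-\rho)-r$ satisfies $g\circ\varphi_3=\varphi_2\circ g$), so the equal-ratio case repeats verbatim forever, the ratio never grows, and no contradiction is reached. Orbits with $o>o^{*}$ do blow up past the ceiling $o\le\rho^{2}$, and orbits with $o<o^{*}$ eventually force an $h$-step, which multiplies the ratio by $1/\rho$ and feeds your geometric-growth argument; but the stationary branch is a genuine hole, and nothing in your setup excludes a straddling $f$ whose normalized terminal map lands exactly on it.

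The paper avoids any descent in this terminal case: it brings in a \emph{second} gap, $\varphi_{21^{\ell+1}}(G_2)$, alongside $\varphi_{21^{\ell}}(G_2)$, shows by explicit inequalities that $\varphi_{21^{\ell+1}}(G_2)$ cannot lie in $f\circ\varphi_1(G_2)$ and hence must lie in $f\circ\varphi_2(G_2)\cup f(G_2)$, and then derives from $f\circ\varphi_2(2\rho)\le\varphi_{21^{\ell+1}}(2\rho)$ the inequality $f(0)<\varphi_{13^{\ell+1}}(0)$, contradicting the case hypothesis --- a finite computation with no monovariant to control. To rescue your approach you would need a separate argument disposing of maps with $g(1)=\rho/(1-\rho)$ (the orientation-preserving analogue of the paper's Lemma \ref{lemma:no-negative}, which handles precisely the corresponding degenerate translation $\rho/(1-\rho)$ in the orientation-reversing setting), or else replace the descent by the two-gap computation.
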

\begin{proof}
  Suppose on the contrary that $f(E) \not\subset \varphi_i(E)$ for all $i \in \{1,2,3\}$.
  By Lemma \ref{lemma:dichotomy} (i), we have $$ f(E) \subset \varphi_1(E) \cup \varphi_2(E)\;\;\text{and}\;\; f(0) < \rho < f(1). $$
  By Lemma \ref{lemma:positive-2-2}, we have $f(E) \subset \varphi_{13}(E) \cup \varphi_2\big( \varphi_1(E) \cup \varphi_2(E) \big)$.
  Then we can find $\ell \in \N$ such that
  \begin{equation}\label{eq:ell-subset}
    f(E) \subset \varphi_{13^\ell}(E)\cup \varphi_{21^{\ell-1}}\big( \varphi_1(E) \cup \varphi_2(E) \big),
  \end{equation}
  and
  \begin{equation}\label{eq:ell-not-subset}
    f(E) \not\subset \varphi_{13^{\ell+1}}(E)\cup \varphi_{21^{\ell}}\big( \varphi_1(E) \cup \varphi_2(E) \big).
  \end{equation}
  Thus we have \[ f(0) < \varphi_{13^{\ell+1}}(0) \;\; \text{or}\;\; f(1) > \varphi_{21^{\ell} 2}(1).\]

  If $f(0) < \varphi_{13^{\ell+1}}(0)$, then by (\ref{eq:ell-subset}) we have $f(0)\le \varphi_{13^{\ell}2}(1)$. Note that $f(1) > \rho=\varphi_1(1)$.
  Then the open interval $\varphi_{13^\ell}(G_2)$ must be contained in gaps of $f(E)$. That is,
  \[ \varphi_{13^\ell}(G_2) \subset \bigcup_{\ii \in \Omega^*} f \circ \varphi_{\ii}(G_2). \]
  By (\ref{eq:ell-subset}) we have $r \le 3 \rho^{\ell+1}< \rho^{\ell}$ because $\rho < 1/3$.
  It follows that $|\varphi_{13^\ell}(G_2)| = \rho^{\ell+1} |G_2| > r \rho |G_2|$.
  Thus we have $\varphi_{13^{\ell}}(G_2) \subset f(G_2)$.
  Note again by (\ref{eq:ell-subset}) that $\varphi_{13^{\ell}}(0) \le f(0)$.
  By Lemma \ref{lemma:positive-2-1}, we obtain $f(x)= \varphi_{13^{\ell}}(x)$.
  It follows that $f(1) = \varphi_{13^\ell}(1) = \rho$, a contradiction.

  If $f(0) \ge \varphi_{13^{\ell+1}}(0)$ and $f(1) > \varphi_{21^{\ell} 2}(1)$, then by (\ref{eq:ell-subset}) we have $f(1) \ge \varphi_{21^{\ell} 3}(0)$. Note that $f(0) < \rho=\varphi_2(0)$. Then the open intervals $\varphi_{2 1^{\ell+1}}(G_2)$ and $\varphi_{21^\ell}(G_2)$ are contained in gaps of $f(E)$.
  That is,
  \[ \varphi_{2 1^{\ell+1}}(G_2) \cup \varphi_{21^\ell}(G_2) \subset \bigcup_{\ii \in \Omega^*} f \circ \varphi_{\ii}(G_2). \]
  Since $r < \rho^\ell$, we have $|\varphi_{21^\ell}(G_2)| = \rho^{\ell+1} |G_2| > r \rho |G_2|$.
  Thus, we obtain
  \begin{equation}\label{eq:condition-12}
    \varphi_{21^\ell}(G_2) \subset f(G_2).
  \end{equation}
  Note that $|\varphi_{2 1^{\ell+1}}(G_2)| > r \rho^2 |G_2|$ and the interval $\varphi_{2 1^{\ell+1}}(G_2)$ lies in the left of $\varphi_{21^\ell}(G_2)$.
  So we have
  \begin{equation}\label{eq:condition-11}
    \varphi_{2 1^{\ell+1}}(G_2) \subset f\circ\varphi_1(G_2) \cup f\circ\varphi_2(G_2) \cup f(G_2).
  \end{equation}
  By (\ref{eq:condition-12}) we have $r \ge \rho^{\ell+1}$.
  If $r = \rho^{\ell+1}$, then $\varphi_{21^\ell}(G_2)= f(G_2)$. This implies that $f(x) = \varphi_{21^\ell}(x)$. So we have $f(0) = \varphi_{21^\ell}(0)=\rho$, a contradiction.
  Thus, we obtain $r > \rho^{\ell+1}$.
  Again by (\ref{eq:condition-12}) we have $f(2\rho) \le \varphi_{21^\ell}(2\rho)$.
  It follows that
  \begin{align*}
    f\circ\varphi_1(1-\rho) & = f(\rho -\rho^2) = f(2\rho) - r (\rho+\rho^2)\\
     & < \varphi_{21^\ell}(2\rho) - \rho^{\ell+1}(\rho+\rho^2) \\
     & = \rho + \rho^{\ell+2} - \rho^{\ell+3} \\
     & = \varphi_{21^{\ell+1}}(1-\rho),
  \end{align*}
  which means that $\varphi_{2 1^{\ell+1}}(G_2) \not\subset f\circ\varphi_1(G_2)$.
  Together with (\ref{eq:condition-11}) we have
  \[\varphi_{2 1^{\ell+1}}(G_2) \subset f\circ\varphi_2(G_2) \cup f(G_2).\]
  It follows that $f\circ \varphi_2(2\rho) \le \varphi_{2 1^{\ell+1}}(2\rho)$.
  Then we have
  \begin{align*}
    f(0) & = f(\rho+2\rho^2) - r(\rho+2\rho^2) =  f\circ \varphi_2(2\rho) - r(\rho+2\rho^2) \\
    & < \varphi_{2 1^{\ell+1}}(2\rho) - \rho^{\ell+1}(\rho+2\rho^2) \\
    & = \rho - \rho^{\ell+2} \\
    & = \varphi_{13^{\ell+1}}(0),
  \end{align*}
  which contradicts the previous assumption that $f(0) \ge \varphi_{13^{\ell+1}}(0)$.

  Therefore, we conclude that there exists $i \in \{1,2,3\}$ such that $f(E) \subset \varphi_i(E)$.
\end{proof}

\begin{lemma}\label{lemma:no-negative}
  For any $0 < r < 1$, we have
  \[ -r E + \frac{\rho}{1-\rho} \not\subset \varphi_1(E) \cup \varphi_2(E). \]
\end{lemma}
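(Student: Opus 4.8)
The plan is to argue by contradiction, using in an essential way that $t:=\rho/(1-\rho)$ is the fixed point of $\varphi_2$. Suppose $-rE+t\subset A$ for some $0<r<1$, where $A:=\varphi_1(E)\cup\varphi_2(E)$. Since $1\in E$, its image $t-r$ lies in $A\subset[0,2\rho]$, forcing $r\le t$; so I may assume $r\le t$. The key reduction is a renormalization at $t$. If $-rE+t\subset\varphi_2(E)$, then applying $\varphi_2^{-1}$ (which fixes $t$) gives $-\tfrac r\rho E+t\subset E$, and since the top point $t<2\rho$ is not in $\varphi_3(E)$, Lemma \ref{lemma:dichotomy}(i) (valid because $\lambda=\rho<(1-\rho)/2$) upgrades this to $-\tfrac r\rho E+t\subset A$. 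Now $-rE+t\subset\varphi_2(E)$ holds exactly when $t-r\ge\rho$, i.e. $r\le t-\rho=\rho t$, and then $r/\rho\le t<1$, so the hypothesis of Lemma \ref{lemma:dichotomy}(i) is met at each step. Iterating, I replace $r$ by $r/\rho$ until $r$ lands in $(\rho t,t]$; for such $r$ the copy $B:=-rE+t$ straddles the touching point $\rho=\max\varphi_1(E)=\min\varphi_2(E)$, with $\min B=t-r\in[0,\rho)$ and $\max B=t$.

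Next I split $B$ at $\rho$. Writing $s:=r/\rho\in(t,u]$ with $u:=t/\rho=1/(1-\rho)$, and letting $c\in(0,1)$ be the point with $-rc+t=\rho$ (so $c=t/s$), the containment $B\subset A$ is equivalent, via $\varphi_2^{-1}$ applied to $B\cap\varphi_2(E)=-r(E\cap[0,c])+t$ and $\varphi_1^{-1}$ applied to $B\cap\varphi_1(E)=-r(E\cap[c,1])+t$, to the two statements
\[ -s\,(E\cap[0,c])+t\subset E \qquad\text{and}\qquad -s\,(E\cap[c,1])+u\subset E. \]
The first is a reflected copy of scale $s$ whose top is anchored at the fixed point $t$ of $\varphi_2$ (here $c\mapsto 0$), while the second is a reflected copy of the same scale whose top is anchored at the fixed point $1$ of $\varphi_3$ (here $c\mapsto 1$). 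In the cleanest instance $c\in(2\rho,1-\rho)$ (so $c$ lies in the gap $G_2$) these become $-sA+t\subset E$ and $-s\varphi_3(E)+u\subset E$.

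The heart of the matter — and the step I expect to be the main obstacle — is to rule out these two conditions holding simultaneously. The obstruction is precisely the asymmetry of $E$ recorded in Lemma \ref{lemma:symmetric}: near its bottom $0$ the set $E$ begins with the two touching blocks $\varphi_1(E),\varphi_2(E)$ before its first large gap, whereas near its top $1$ it consists of the single block $\varphi_3(E)$ immediately preceded by the largest gap $G_2$. The first condition forces the local structure of $E$ just below $c$ to reflect (orientation-reversed, at scale $s$) onto the structure of $E$ just above $0$, while the second forces the local structure just above $c$ to reflect onto the structure of $E$ just below $1$; since ``$E$ viewed from $0$'' and ``$E$ viewed from $1$'' differ exactly by $x\mapsto 1-x$, a common scale $s$ cannot reconcile both unless $1-E=E$. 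I would make this rigorous by an induction in the spirit of Proposition \ref{prop:no-negative-1}(ii): tracking the images under $B$ of the cylinders $\varphi_{3^n}(E)$ (which describe $E$ near $1$, i.e. near $\min B$), one shows by induction on $n$ that $\varphi_1^{-1}\circ(-r\,\cdot\,+t)$ sends each $\varphi_{3^n}(E)$ into a uniquely determined cylinder of $E$ accumulating at $\min B=\varphi_1(u-s)$, whose addresses must then coincide with those produced by the orientation of the $\varphi_1$-pattern at $0$; letting $n\to\infty$ yields an equality of endpoints that forces $1-E=E$, contradicting Lemma \ref{lemma:symmetric}. Running this argument first in the case $c\in(2\rho,1-\rho)$ should isolate the mechanism, after which the remaining ranges $c\le 2\rho$ and $c\ge 1-\rho$ follow by the same bookkeeping with the roles of the blocks adjusted.
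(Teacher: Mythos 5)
Your opening move is the same as the paper's: exploit that $\rho/(1-\rho)$ is the fixed point of $\varphi_2$ and renormalize by $\varphi_2^{-1}$ to push $r$ upward (the paper realizes this by taking the \emph{maximal} admissible $r$ and showing $f(1)<\rho$; your iteration ``until $r\in(\rho t,t]$'' achieves the same thing, and your use of Lemma \ref{lemma:dichotomy}(i) to restore the containment in $\varphi_1(E)\cup\varphi_2(E)$ after each renormalization is legitimate since the renormalized ratio stays below $1$ and the top point $t<1-\rho$ rules out $\varphi_3(E)$). The splitting of $B$ at the touching point $\rho$ into two reflected copies anchored at $t$ and at $1$ is also a correct reformulation of the hypothesis.

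The gap is that the contradiction itself is never derived. You write that ruling out the two anchored conditions simultaneously is ``the step I expect to be the main obstacle,'' and then only announce a plan (``I would make this rigorous by an induction\dots letting $n\to\infty$ yields an equality of endpoints that forces $1-E=E$''). That announced induction is precisely the content of the lemma: the assertion that a common scale $s$ cannot reconcile the two local reflections ``unless $1-E=E$'' is not a consequence of anything you have established, and the mechanism by which cylinder addresses ``must coincide'' is not specified, let alone verified. The paper's proof at this point is quantitative rather than symmetry-based: it observes that the specific gap $\varphi_{21}(G_2)$ of $E$ lies below $f(0)=\rho/(1-\rho)$ and above $f(1)<\rho$, hence must sit inside one of the gaps $f\circ\varphi_1(G_2)$, $f\circ\varphi_2(G_2)$, $f(G_2)\cup f\circ\varphi_3(G_2)$ of $f(E)$, and in each of the three cases extracts explicit, mutually incompatible bounds on $r$ (e.g.\ $r\le\rho$ versus $r=\rho(2-\rho)/(2(1-\rho))>\rho$ in the first case). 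None of that, nor any substitute for it, appears in your proposal; the closing sentence deferring the ranges $c\le 2\rho$ and $c\ge 1-\rho$ to ``the same bookkeeping'' defers exactly the computations that would have to carry the argument. As written, the proposal is a correct setup followed by an unproved claim where the proof should be.
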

\begin{proof}
  Suppose on the contrary that there exists $0< r_0 < 1$ such that
  \begin{equation}\label{eq:r-0}
    -r_0 E + \frac{\rho}{1-\rho} \subset \varphi_1(E) \cup \varphi_2(E).
  \end{equation}
  Let $r$ be the maximal value $0< r_0 \le 1$ satisfying (\ref{eq:r-0}).
  Write \[ f(x) = -r x + \frac{\rho}{1-\rho}. \]
  Then we have $f(E) \subset \varphi_1(E) \cup \varphi_2(E)$ and $r < 1$.
  Note that the point $\rho/(1-\rho)$ is the fixed point of $\varphi_2$.

  If $f(1)\ge \rho$, then noting that $f(0) = \rho/(1-\rho)$ we have \[ f(E) \subset \big( \varphi_1(E) \cup \varphi_2(E) \big) \cap \Big[ \rho, \frac{\rho}{1-\rho} \Big] \subset \varphi_2\big( \varphi_1(E) \cup \varphi_2(E) \big). \]
  This implies that \[ -\frac{r}{\rho} E + \frac{\rho}{1-\rho} \subset \varphi_1(E) \cup \varphi_2(E), \]
  which contradicts the maximality of $r$.
  Thus, we obtain $f(1) < \rho = \varphi_2(0)$.

  Note that $f(0)= \rho/(1-\rho) > \varphi_{21}(1)$. Then the open interval $\varphi_{21}(G_2)$ is contained in gaps of $f(E)$.
  It follows that \[ \varphi_{21}(G_2) \subset f \circ \varphi_1(G_2) \cup f \circ \varphi_2(G_2) \cup f(G_2) \cup f\circ\varphi_3(G_2). \]

  \textbf{Case (i)}: $\varphi_{21}(G_2) \subset f \circ \varphi_1(G_2)$.
  Then we have $r\ge \rho$ and $f\circ \varphi_1(2\rho) \ge \varphi_{21}(1-\rho)$.
  It follows that \[ f\circ \varphi_{12}(0) = f\circ \varphi_{1}(\rho) = f\circ \varphi_1(2\rho) + r\rho^2 \ge \varphi_{21}(1-\rho)+ \rho^3 = \varphi_{21}(1) = \varphi_{213}(1), \]
  and \[ f\circ \varphi_{12}(1) = f(2\rho^2) = \frac{\rho}{1-\rho} - 2r\rho^2 < \varphi_{2222}(1) - 2\rho^3 = \varphi_{213}(2\rho). \]
  This implies that the open interval $\varphi_{213}(G_2)$ is contained in gaps of $f\circ \varphi_{12}(E)$.
  Then we have $\varphi_{213}(G_2) \subset f\circ \varphi_{12}(G_2)$, and hence,
  \begin{equation}\label{eq:condition-15}
    f(\rho^2+2\rho^3) = f\circ \varphi_{12}(2\rho) \ge \varphi_{213}(1-\rho) = \rho+\rho^2 - \rho^4.
  \end{equation}
  Note that \[ f\circ \varphi_{11}(1) = f(\rho^2) = \frac{\rho}{1-\rho} - r\rho^2 < \varphi_{2222}(1) - \rho^3 = \varphi_{221}(2\rho), \]
  and \[ f\circ \varphi_{11}(0) = f(0) = \frac{\rho}{1-\rho} > \varphi_{222}(0) = \varphi_{221}(1).\]
  Then the open interval $\varphi_{221}(G_2)$ is contained in gaps of $f\circ \varphi_{11}(E)$.
  It follows that $\varphi_{221}(G_2) \subset f\circ \varphi_{11}(G_2)$, and hence,
  \[ f(\rho^2 - \rho^3) = f\circ \varphi_{11}(1-\rho) \le \varphi_{221}(2\rho) =\rho^2 +\rho + 2\rho^4. \]
  Together with (\ref{eq:condition-15}), we have \[ 2r\rho^3 = f(\rho^2 - \rho^3) - f(\rho^2+2\rho^3) \le 3\rho^4, \]
  i.e., $r \le \rho$. So, we obtain $r=\rho$, and hence $\varphi_{21}(G_2) = f \circ \varphi_1(G_2)$.
  Then we have $f\circ \varphi_1(2\rho) = \varphi_{21}(1-\rho)$, which implies that \[ r = \frac{\rho(2-\rho)}{2(1-\rho)} > \rho. \]
  This leads to a contradiction

  \textbf{Case (ii)}: $\varphi_{21}(G_2) \subset f \circ \varphi_2(G_2)$.
  Then we have $r \ge \rho$ and $f\circ \varphi_2(2\rho) \ge \varphi_{21}(1-\rho)$,
  which implies that \[ r \le \frac{\rho^2(2-\rho)}{(1-\rho)(2\rho+1)}=\rho \frac{2\rho - \rho^2}{1+\rho-2\rho^2} < \frac{\rho}{2},\]
  where the last inequality follows from $0< \rho < 1/3$.
  This leads to a contradiction.

  \textbf{Case (iii)}: $\varphi_{21}(G_2) \subset f(G_2) \cup f \circ \varphi_3(G_2)$.
  It follows that  $\varphi_{21}(G_2) \subset \big( f(1), f(2\rho) \big)$.
  Then we have $f(2\rho) \ge \varphi_{21}(1-\rho)$,
  which implies that \[ r \le \frac{\rho^2(2-\rho)}{2(1-\rho)} < \frac{\rho^2}{1-\rho}. \]
  Note that \[ f(1) = \frac{\rho}{1-\rho} - r < \rho.\]
  It follows that \[ r > \frac{\rho^2}{1-\rho}, \]
  a contradiction.

  We have deduced a contradiction for all cases.
  Therefore, we conclude the desired result.
\end{proof}

\begin{proposition}\label{prop:no-negative-2}
  For any $0 < r < 1$ and for any $t \in \R$, we have
  \[ -r E + t \not\subset \varphi_1(E) \cup \varphi_2(E). \]
\end{proposition}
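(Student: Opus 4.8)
The plan is to argue by contradiction, assuming that $-rE+t \subset A$ for some $0<r<1$ and $t\in\R$, where $A := \varphi_1(E)\cup\varphi_2(E)$, and to reduce this general situation to the anchored case already settled in Lemma \ref{lemma:no-negative}. Write $f(x)=-rx+t$, so that $f(E)\subset A$. The structural facts I would rely on are that $A$ contains two scaled copies of itself, namely $\varphi_1(A)=\varphi_{11}(E)\cup\varphi_{12}(E)$ and $\varphi_2(A)=\varphi_{21}(E)\cup\varphi_{22}(E)$; that the two largest gaps of $A$ are $\varphi_1(G_2)$ and $\varphi_2(G_2)$, each of length $\rho|G_2|=\rho(1-3\rho)$; and that the largest gap of $f(E)$ is $f(G_2)$, of length $r(1-3\rho)$.

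First I would perform a descent into these sub-copies. Choose the longest word $\ii\in\{1,2\}^*$ such that $f(E)\subset\varphi_{\ii}(A)$ (the empty word is admissible), and replace $f$ by $\varphi_{\ii}^{-1}\circ f$. This is again an orientation-reversing similitude mapping $E$ into $A$; since $f(E)\subset\varphi_{\ii}(A)$ forces $r\le 2\rho^{|\ii|+1}$, the new ratio $r\rho^{-|\ii|}$ lies in $(0,2\rho]\subset(0,1)$, so contractivity is preserved. After this reduction we may assume $f(E)\subset A$ while $f(E)\not\subset\varphi_1(A)$ and $f(E)\not\subset\varphi_2(A)$. This un-pushable property is designed to play the role that the maximality of $r$ plays in the proof of Lemma \ref{lemma:no-negative}.

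The core of the argument is then to analyze this configuration via the gap structure, exactly in the spirit of Lemma \ref{lemma:no-negative}. Using $f(E)\subset A$ together with $f(E)\not\subset\varphi_1(A),\varphi_2(A)$, I would locate the rightmost point $f(0)=t$ and the position of the largest gap $f(G_2)$: because the internal gaps of $A$ that $f(E)$ must avoid have controlled lengths, the inclusion forces $f(G_2)$ to align with one of the gaps $\varphi_{\jj}(G_2)$ of $A$, which pins $t$ down. The endpoint bookkeeping should then either yield a direct numerical contradiction (as in Cases (i)--(iii) of Lemma \ref{lemma:no-negative}, where comparisons of the form $f\circ\varphi_j(2\rho)\ge\varphi_{\jj}(1-\rho)$ bound $r$ against incompatible quantities) or show that the rightmost point is forced onto the fixed point $\rho/(1-\rho)$ of $\varphi_2$, whereupon Lemma \ref{lemma:no-negative} applies directly to give a contradiction. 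The main obstacle I anticipate is precisely this final case analysis: determining which gap of $A$ the interval $f(G_2)$ can occupy and translating each possibility into sharp inequalities on $r$, since $f$ no longer fixes the distinguished point $\rho/(1-\rho)$ and the free translation $t$ must be carried through all the comparisons.
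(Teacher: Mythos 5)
Your overall strategy (descend into embedded copies of $A:=\varphi_1(E)\cup\varphi_2(E)$ until the configuration is ``un-pushable'', then analyze it by gap alignment) has a genuine gap: $\varphi_1(A)$ and $\varphi_2(A)$ are not the only scaled copies of $A$ of ratio $\rho$ inside $A$. Since $\lambda=\rho$, the intervals $\varphi_1([0,1])=[0,\rho]$ and $\varphi_2([0,1])=[\rho,2\rho]$ abut, and the set $\varphi_{13}(E)\cup\varphi_{21}(E)=\rho A+\rho-\rho^2$ is a third copy of $A$ straddling the junction at $x=\rho$; it is not of the form $\varphi_{\ii}(A)$ for any $\ii\in\{1,2\}^*$. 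Consequently, after your maximal descent the map can still satisfy $f(E)\subset\rho A+\rho-\rho^2$, so the terminal configuration is not rigid, the translation $t$ is not pinned down to finitely many values, and---crucially---there is no mechanism forcing $t$ onto the fixed point $\rho/(1-\rho)$, so Lemma \ref{lemma:no-negative} cannot be ``applied directly''. (The paper uses that lemma in the opposite direction: it guarantees $t\ne\rho/(1-\rho)$, which is what makes it possible to choose the translation extremal on one side of that fixed point.)

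The paper's proof runs upward rather than downward: assuming $-rE+t_0\subset A$, it produces $t_1$ with $-(r/\rho)E+t_1\subset A$, and iterating this amplification gives an absurdity. To obtain the amplification it keeps $r$ fixed and takes $b$ to be the maximal (if $t_0>\rho/(1-\rho)$) or minimal (if $t_0<\rho/(1-\rho)$) admissible translation, decomposes $A$ into the three clusters $F_1=\varphi_{11}(E)\cup\varphi_{12}(E)$, $F_2=\varphi_{13}(E)\cup\varphi_{21}(E)\cup\varphi_{22}(E)$, $F_3=\varphi_{23}(E)$ separated by the gaps $\varphi_1(G_2)$ and $\varphi_2(G_2)$, and applies Lemma \ref{gap-lemma} to $f\big(\varphi_1(E)\cup\varphi_2(E)\big)$. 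The hard case is $f(A)\subset F_2$: there the sub-case $f\circ\varphi_1(E)\subset\varphi_{21}(E)\cup\varphi_{22}(E)$ is ruled out precisely by the extremality of $b$ (it would yield the admissible translation $b/\rho-1$, which lies strictly beyond $b$), and the sub-case $r=\rho$ is ruled out by the asymmetry of $E$ (Lemma \ref{lemma:symmetric}). Your proposal contains no substitute for either of these two mechanisms, and without them the case analysis you anticipate does not close.
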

\begin{proof}
  Suppose on the contrary that there exists $0< r < 1$ and $t_0 \in \R$ such that \[ -r E + t_0 \subset \varphi_1(E) \cup \varphi_2(E). \]
  We claim that
  \begin{equation}\label{eq:r-0-rho}
    \text{there exists}\; t_1 \in \R\;\text{such that}\; - \frac{r}{\rho} E + t_1 \subset \varphi_1(E) \cup \varphi_2(E).
  \end{equation}
  If the claim (\ref{eq:r-0-rho}) were proved, then by induction, for all $n \in \N$ there would exist $t_n \in \R$ such that \[- \frac{r}{\rho^n} E + t_n \subset \varphi_1(E) \cup \varphi_2(E).\]
  Obviously, this leads to a contradiction.
  It remains to show the claim (\ref{eq:r-0-rho}).

  By Lemma \ref{lemma:no-negative}, we have $t_0 \ne \rho/(1-\rho)$.
  Let
  \begin{equation}\label{eq:def-b}
    b =
    \left\{
    \begin{aligned}
      \max\big\{  t \in \R: -r E + t \subset \varphi_1(E) \cup \varphi_2(E) \big\},  \quad\text{if}\;\; t_0 > \frac{\rho}{1-\rho}; \\
      \min\big\{  t \in \R: -r E + t \subset \varphi_1(E) \cup \varphi_2(E) \big\},  \quad\text{if}\;\; t_0 < \frac{\rho}{1-\rho}.
    \end{aligned}
    \right.
  \end{equation}
  Write $f(x) = -rx + b$. Then we have $f(E)\subset \varphi_1(E) \cup \varphi_2(E)$.

  Let
  \begin{align*}
    F_1 & = \varphi_{11}(E) \cup \varphi_{12}(E), \\
    F_2 & = \varphi_{13}(E) \cup \varphi_{21}(E) \cup \varphi_{22}(E), \\
    F_3 & = \varphi_{23}(E).
  \end{align*}
  Then $f\big( \varphi_1(E) \cup \varphi_2(E) \big) \subset \varphi_1(E) \cup \varphi_2(E) = F_1 \cup F_2 \cup F_3$.
  Note that \[ \min_{1\le i < j \le 3} \mathrm{dist}(F_i,F_j) = \rho |G_2| > r \rho |G_2| = \mathfrak{g}\big( f(\varphi_1(E) \cup \varphi_2(E)) \big). \]
  By Lemma \ref{gap-lemma}, there exists $i_0 \in \{1,2,3\}$ such that
  \[ f\big( \varphi_1(E) \cup \varphi_2(E) \big) \subset F_{i_0}.\]

  \textbf{Case (i)}: $f\big( \varphi_1(E) \cup \varphi_2(E) \big) \subset F_1=\varphi_{11}(E) \cup \varphi_{12}(E)$. Then we have $f(0)\le \varphi_{12}(1)$. Thus, we obtain
  \begin{align*}
  f(E) & \subset \big( \varphi_1(E) \cup \varphi_2(E) \big) \cap (-\f, f(0)] \\
  & \subset \big( \varphi_1(E) \cup \varphi_2(E) \big) \cap (-\f, \varphi_{12}(1)] \\
  & = \rho\big( \varphi_{1}(E) \cup \varphi_{2}(E) \big),
  \end{align*}
  which implies the claim (\ref{eq:r-0-rho}).

  \textbf{Case (ii)}: $f\big( \varphi_1(E) \cup \varphi_2(E) \big) \subset F_2 = \varphi_{13}(E) \cup \varphi_{21}(E) \cup \varphi_{22}(E)$.
  If $f\circ \varphi_1(E) \subset \varphi_{21}(E) \cup \varphi_{22}(E)$, then we have \[ \varphi_2^{-1} \circ f \circ \varphi_1(E) = -r E + \frac{b}{\rho}-1 \subset \varphi_1(E) \cup \varphi_2(E).  \]
  If $t_0 > \rho/(1-\rho)$, then by (\ref{eq:def-b}) we have $b > \rho/(1-\rho)$, i.e., $b/\rho -1 > b$, which contradicts the maximality of $b$;
  if $t_0 < \rho/(1-\rho)$, then by (\ref{eq:def-b}) we have $b < \rho/(1-\rho)$, i.e., $b/\rho -1 < b$, which contradicts the minimality of $b$.
  Thus, we conclude that \[ f\circ \varphi_1(E) \cap \varphi_{13}(E) \ne \emptyset. \]
  Then we have $f(\rho) = f\circ \varphi_1(1) \le \varphi_{13}(1)$.
  This implies that \[ f\circ \varphi_2(E) \subset F_2 \cap (-\f, f(\rho)] \subset F_2 \cap (-\f, \varphi_{13}(1)]= \varphi_{13}(E). \]
  It follows that $r \le \rho$.
  If $r = \rho$, then by Lemma \ref{lemma:subset-to-equal} we have $\varphi_{13}^{-1} \circ f \circ \varphi_2(E) = E$.
  This means that the set $E$ is symmetric, which contradicts Lemma \ref{lemma:symmetric}.
  So, we obtain $r < \rho$.
  Thus, \[ \mathfrak{g}\big( f(E) \big) = r|G_2| < \rho|G_2| = \min_{1\le i < j \le 3} \mathrm{dist}(F_i,F_j). \]
  By Lemma \ref{gap-lemma}, we conclude that \[ f(E) \subset F_2= \varphi_{13}(E) \cup \varphi_{21}(E) \cup \varphi_{22}(E). \]
  Since $r<\rho$ and $f(\rho) = f\circ \varphi_1(1) \le \varphi_{13}(1)$, we have \[ f(0) = f(\rho) + r\rho < \varphi_{13}(1) + \rho^2 = \varphi_{21}(1).\]
  It follows that
  \begin{align*}
    f(E) & \subset F_2 \cap (-\f,f(0)]  \\
    & \subset F_2 \cap (-\f,\varphi_{21}(1)] \\
    & = \varphi_{13}(E) \cup \varphi_{21}(E) \\
    & = \rho\big( \varphi_1(E) \cup \varphi_2(E) \big) + \rho - \rho^2,
  \end{align*}
  which implies the claim (\ref{eq:r-0-rho}).

  \textbf{Case (iii)}: $f\big( \varphi_1(E) \cup \varphi_2(E) \big) \subset F_3=\varphi_{23}(E)$.
  It follows that $2r\rho \le \rho^2$, which implies $r < \rho$.
  Thus, \[ \mathfrak{g}\big( f(E) \big) = r|G_2| < \rho|G_2| = \min_{1\le i < j \le 3} \mathrm{dist}(F_i,F_j). \]
  By Lemma \ref{gap-lemma}, we conclude that \[ f(E) \subset F_3= \varphi_{23}(E) = \rho \cdot \varphi_1(E) + \rho(2-\rho) , \]
  which also implies the claim (\ref{eq:r-0-rho}).

  Therefore, we have proved the claim and complete the proof.
\end{proof}

\begin{proof}[Proof of Theorem \ref{thm:three-map} (i) for $\lambda=\rho$]
  Write $f(x) = r x + t$ with $0< |r| < 1$ and $t \in \R$.
  Note that $f(E) \subset E$. It follows that $\rho\cdot f(E) \subset \rho E = \varphi_1(E)$. By Proposition \ref{prop:no-negative-2}, we obtain $0< r < 1$.
  Then by Proposition \ref{prop:positive-2}, there exists $i_1 \in \{1,2,3\}$ such that $f(E) \subset f_{i_1}(E)$.
  Choose the longest word $\ii \in \Omega^*$ such that \[ f(E) \subset \varphi_{\ii}(E). \]
  It follows that $r \le \rho^{|\ii|}$.
  If $r < \rho^{|\ii|}$, then applying Proposition \ref{prop:positive-2} for the map $\varphi_{\ii}^{-1}\circ f$ we obtain $f(E) \subset \varphi_{\ii i_0}(E)$ for some $i_0 \in \{1,2,3\}$.
  This contradicts the longest length of $\ii$.
  Thus, we have $r=\rho^{|\ii|}$.
  By Lemma \ref{lemma:subset-to-equal}, we obtain $f(E) = \varphi_{\ii}(E)$. It follows that $f(x) = \varphi_{\ii}(x)$.
\end{proof}

\section*{Acknowledgements}
The author was supported by the National Natural Science Foundation of China (No. 12501110, 12471085) and the China Postdoctoral Science Foundation (No. 2024M763857).


\end{document}